\documentclass[12pt]{article}
\usepackage{setspace}
\usepackage{amsmath,amssymb,amsthm}
\usepackage{mathrsfs}
\usepackage{geometry}
\usepackage{enumitem}
\usepackage{environ}
\usepackage{setspace}
\usepackage{titlesec}
\usepackage[all,cmtip]{xy}
\usepackage{tikz}
\usetikzlibrary{matrix,arrows}
\usepackage[colorlinks=true,citecolor=blue]{hyperref}
\usepackage{mathtools}

\NewEnviron{prf}[1][]{\begin{proof}[\bf #1Proof]\BODY\end{proof}}{}
\NewEnviron{slt}[1][]{\begin{proof}[\bf #1	解]\BODY\end{proof}}{}
\newtheorem{definition}{Definition}[section]
\newtheorem{theorem}[definition]{Theorem}
\newtheorem{lemma}[definition]{Lemma}
\newtheorem{proposition}[definition]{Proposition}
\newtheorem{remark}[definition]{Remark}

\newtheorem{corollary}[definition]{Corollary}
\newtheorem{conjecture}[definition]{Conjecture}
\newtheorem{question}[definition]{Question}
\newtheorem{notation}[definition]{Notation}

\newtheorem*{DML}{Dynamical Mordell--Lang Conjecture (DML Conjecture)}

\newcommand\address[1]{#1}
\newcommand\email[1]{\emph{Email address}: #1}
  
\setlist[enumerate,1]{label=(\roman*)}
\setlist[enumerate,2]{label=(\alph*)}

\title{\textbf{Very sparse return sets in arithmetic dynamics}}
\author{She Yang}
\date{}

\begin{document}
\begin{spacing}{1.25}

\maketitle

\begin{abstract}
When studying the dynamical Mordell--Lang conjecture by using the $p$-adic method, experts have found that return sets of the $p$-adic version of the DML problem can be very sparse. In this article, we investigate the similar phenomenon for a complex analytic version of the DML problem and the DML problem for backward orbits.
\end{abstract}

\section{Introduction}

The dynamical Mordell--Lang (DML) conjecture is one of the core problems in the field of arithmetic dynamics. It was proposed by Ghioca and Tucker in \cite{GT09} and can be stated as follows.

\begin{DML}
Let $f\colon X\to X$ be an endomorphism of a quasi-projective variety over $\mathbb{C}$ and let $V$ be a closed subset of $X$. Then for every $x\in X(K)$, the return set $\{n\in\mathbb{N}\mid f^n(x)\in V\}$ is a finite union of arithmetic progressions.
\end{DML}

We write $\mathbb{N}=\mathbb{Z}_+\cup\{0\}$. An \emph{arithmetic progression} is a set of the form $\{a+bm\mid m\in\mathbb{N}\}$ where $a,b\in\mathbb{N}$. In particular, it can be a singleton.

There is an extensive literature on various cases of the DML conjecture. Two significant cases are as follows.

\begin{enumerate}
\item DML conjecture holds if $f$ is \'etale. See \cite{Bel06} and \cite[Theorem 1.3]{BGT10}.
\item DML conjecture holds if $X = \mathbb{A}^2$. See \cite{Xie17} and \cite[Theorem 4]{Xie}.
\end{enumerate}

One can consult \cite{BGT16,Xie} and the references therein for further known results.

The $p$-adic method is a very powerful tool for studying the dynamical Mordell--Lang conjecture \cite{Bel06,BGT10}. During the research, \cite[Proposition 7.1]{BGKT10} finds that there is an analytic counterexample to the $p$-adic formulation of the DML conjecture. This proposition is stated as follows.

\begin{proposition}\label{padic}
For any prime $p\geq2$, there is an increasing sequence $\{n_j\}_{j\geq1}$ of positive integers and a power series $f(z)\in\mathbb{Z}_p[[z]]$ such that $f(p^{n_j})=n_j$ for all $j$.
\end{proposition}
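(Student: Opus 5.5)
The plan is to construct $f$ as a $p$-adically convergent Newton interpolation series, choosing the nodes $n_j$ inductively. At each stage the next interpolation coefficient should be a $p$-adic integer, and it should be divisible by a growing power of $p$.

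\textbf{Setup.} I would look for $f$ of the form
\[
f(z)=\sum_{k\ge1} b_k\,Q_k(z),\qquad Q_k(z)=\prod_{i<k}\bigl(z-p^{n_i}\bigr),
\]
with $b_k\in\mathbb{Z}_p$ (so $Q_1=1$). Write $g_k=\sum_{m\le k}b_mQ_m\in\mathbb{Z}_p[z]$. Since $Q_m(p^{n_j})=0$ for $m>j$, once the series converges coefficientwise we get $f(p^{n_j})=g_j(p^{n_j})$. So it suffices to arrange $g_j(p^{n_j})=n_j$ for every $j$. Newton interpolation forces
\[
b_k=\frac{n_k-g_{k-1}(p^{n_k})}{Q_k(p^{n_k})}.
\]
Start with any $n_1\ge1$ and $b_1=n_1$.

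\textbf{Key valuation computation.} Suppose $n_1<\dots<n_{k-1}<n_k$. Then
\[
p^{n_k}-p^{n_i}=p^{n_i}\bigl(p^{n_k-n_i}-1\bigr),
\]
and the second factor is a $p$-adic unit. Hence $v_p(Q_k(p^{n_k}))=S_k:=\sum_{i<k}n_i$. For the numerator, $g_{k-1}$ has $\mathbb{Z}_p$-coefficients, so $g_{k-1}(p^{n_k})\equiv g_{k-1}(0)\pmod{p^{n_k}}$. I would therefore choose $n_k$ to be a positive integer satisfying
\[
n_k> S_k+k,\qquad n_k\equiv g_{k-1}(0)\pmod{p^{S_k+k}}.
\]
This is possible because $g_{k-1}(0)\in\mathbb{Z}_p$, the positive integers are dense in $\mathbb{Z}_p$, and each residue class contains arbitrarily large integers. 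With this choice, $v_p\bigl(n_k-g_{k-1}(p^{n_k})\bigr)\ge\min(n_k,S_k+k)=S_k+k$. Therefore $b_k\in\mathbb{Z}_p$ with $v_p(b_k)\ge k$, and by construction $g_k(p^{n_k})=n_k$. The sequence $n_k$ is strictly increasing.

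\textbf{Convergence and conclusion.} Each $Q_k$ has coefficients in $\mathbb{Z}_p$ and $v_p(b_k)\ge k\to\infty$. So for each fixed degree $d$, the coefficient of $z^d$ in $b_kQ_k$ tends to $0$ $p$-adically, and the series defines an element $f\in\mathbb{Z}_p[[z]]$; in fact it lies in the Tate algebra $\mathbb{Z}_p\langle z\rangle$. Evaluating at $z=p^{n_j}$ is continuous for such series, since the terms $b_kQ_k(p^{n_j})$ are eventually zero. This gives $f(p^{n_j})=g_j(p^{n_j})=n_j$.

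The only delicate point is the valuation step: the denominator $Q_k(p^{n_k})$ has valuation exactly $S_k$, and this has to be beaten by the numerator through the congruence choice of $n_k$. That choice is what makes the sequence extremely sparse.
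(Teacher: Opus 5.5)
Your proof is correct. The paper gives no proof of this proposition. It quotes it from \cite[Proposition 7.1]{BGKT10} and records only the consequence $p^{n_j}\mid n_{j+1}-n_j$. Your construction visibly has this property: for $k\ge2$, $n_k\equiv g_{k-1}(0)\equiv g_{k-1}(p^{n_{k-1}})=n_{k-1}\pmod{p^{n_{k-1}}}$, since $S_k+k\ge n_{k-1}$.

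The nearest argument in the paper is the proof of Proposition~\ref{cplx}. It shares your skeleton: an inductive interpolation in which each new correction vanishes at the earlier nodes, with the next node chosen afterwards so that the correction is small. The mechanism for smallness differs.
\begin{itemize}
\item In the archimedean case, the paper uses a Lagrange-type polynomial $Q_j$ with $Q_j(x_i)=n_i$ and shifts the free parameter $\theta$ by the same $\delta_j$, so that the old conditions survive. The new error is absorbed into the integer $k_{j+1}=\lfloor A\rfloor$, and $|\delta_j|$ is small because one divides by the large number $D=n_{j+1}-Q_j(x_{j+1})$.
\item In your $p$-adic setting, the targets $n_j$ are rigid and there is no auxiliary parameter. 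Smallness comes instead from the congruence $n_k\equiv g_{k-1}(0)\pmod{p^{S_k+k}}$. This beats the exact valuation $S_k$ of the Newton denominator $Q_k(p^{n_k})$, which you computed from the unit factors $p^{n_k-n_i}-1$.
\end{itemize}

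One small tightening concerns the identity $f(p^{n_j})=g_j(p^{n_j})$. It holds because $\|f-g_K\|\le p^{-(K+1)}$ in the Gauss norm for $K\ge1$, and $|h(a)|\le\|h\|$ for $h\in\mathbb{Z}_p\langle z\rangle$ and $a\in\mathbb{Z}_p$. Hence $f(p^{n_j})=\lim_K g_K(p^{n_j})$. The vanishing of $b_kQ_k(p^{n_j})$ for $k>j$ only identifies this limit; it is not the reason evaluation is continuous.
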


One can see that the growth of the sequence $\{n_j\}_{j\geq1}$ is very fast. Indeed, we have $p^{n_j}\mid n_{j+1}-n_{j}$ and hence $n_{j+1}\geq n_j+p^{n_j}$ for any $j$. This leads to the following concept of \emph{very sparse} subsets of $\mathbb{N}$. The definition is extracted from the formulation of \cite[Theorem 11.1.0.9]{BGT16}.

\begin{definition}\label{verysparse}
Let $A$ be a subset of $\mathbb{N}$.
\begin{enumerate}
\item
We say $A$ is \emph{very sparse} if $|A\cap[0,n]|=o(\log^{(m)}(n))$ for each $m\in\mathbb{Z}_+$, where $\log^{(m)}$ is the $m$-th iterated logarithm.
\item
We say $A$ is \emph{almost periodic} if it is the union of finitely many arithmetic progressions along with a very sparse set. 
\end{enumerate}
\end{definition}

\begin{remark}\label{ezrmk}
If $A$ and $B$ are very sparse (resp. almost periodic), then so does $A\cup B$, $A\cap B$, and $\{a+bn\mid n\in A\}$ for every natural numbers $a$ and $b$.
\end{remark}

As explained in \cite[p. 209]{BGT16}, Proposition \ref{padic} shows that for the endomorphism $\Phi\colon\mathbb{A}^2\to\mathbb{A}^2$ given by $(x,y)\mapsto(px,y+1)$, the set of $n\in\mathbb{Z}_+$ such that $\Phi^n(1,0)$ lies on the $p$-adic analytic curve $Y=f(X)$ is infinite but very sparse. Hence there exists no $p$-adic analytic version of the DML conjecture.

Naturally, we are also curious about whether there exists a complex analytic version of the DML conjecture. The following proposition gives a negative answer.

\begin{proposition}\label{cplx}
There exist an irrational real number $\theta$ and an analytic function $f(z)$ on the open unit disc $\mathbb{D}=\{z\in\mathbb{C}\mid|z|<1\}$ satisfy the following property. Let $a_1=\frac{1}{2}$ and $a_2=\frac{1}{2}e^{2\pi i\theta}$. Then the set $\{n\in\mathbb{Z}_+\mid f(a_1^n)=a_2^n\}$ is infinite but very sparse.
\end{proposition}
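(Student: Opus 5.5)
The plan is to take $f(z)=z\,e^{2\pi i\varphi(z)}$ with $\varphi$ analytic on $\mathbb{D}$ and $\varphi(0)=0$. With this choice, $f(a_1^n)=a_2^n$ becomes
\[
2^{-n}e^{2\pi i\varphi(2^{-n})}=2^{-n}e^{2\pi i n\theta},
\]
so the condition is simply $n\theta-\varphi(2^{-n})\in\mathbb{Z}$. The task therefore splits into three parts: build $\theta$ and $\varphi$ so that this holds exactly along a rapidly growing sequence $q_1<q_2<\cdots$; show it fails for every other $n$; and arrange $q_{k+1}\geq 2^{q_k}$. The last condition gives tower-type growth, so the count of solutions up to $n$ is $o(\log^{(m)}n)$ for every $m$, which is exactly the very sparse condition of Definition \ref{verysparse}.

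I will construct $\theta=\lim\theta_k$ and $\varphi=\lim\varphi_k$ inductively. Here $\theta_k=p_k/q_k$ is rational with $q_{k-1}\mid q_k$ and $q_k\geq 2^{q_{k-1}}$, and $\varphi_k$ is a polynomial. Writing $\varphi_k=\varphi_{k-1}+\delta_k\,z\prod_{i<k}(z-2^{-q_i})\cdot z^{d_k}$, at stage $k$ I choose $\delta_k$ so that $q_k\theta_{\infty}-\varphi_k(2^{-q_k})\in\mathbb{Z}$. The factor $\prod_{i<k}(z-2^{-q_i})$ preserves the equations already achieved at $q_1,\dots,q_{k-1}$. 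Because all later corrections are forced to be super-exponentially small, I will set things up so that $\theta$ itself is a Liouville-type number, $\theta=\sum_k \epsilon_k$, and define things self-consistently: at step $k$ I fix $\theta$ modulo a tail of size $\ll 2^{-q_k}/q_k$ and solve for $\delta_k$ by a one-variable intermediate value argument, or an implicit function argument, in the real parameter governing $\|q_k\theta\|$. The key scale is that the required value is $\|q_k\theta\|=q_k|\theta-p_k/q_k|\approx 2^{-q_k}$, i.e. $|\theta-p_k/q_k|\approx 2^{-q_k}/q_k$. This is compatible with irrationality and with $\varphi(z)=z+O(z^2)$, because the corrections $\delta_k$ are of size $\ll 2^{-q_k}$ and the sum defining $\varphi$ converges uniformly on compact subsets of $\mathbb{D}$.

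For the exclusion of all other $n$, I use open conditions that survive later perturbations. Fix a stage $k$ and consider $q_k\le n<q_{k+1}$. If $q_k\nmid n$, then $\|n\theta\|\geq 1/q_k-n|\theta-p_k/q_k|\geq 1/(2q_k)$, whereas $|\varphi(2^{-n})|\le C2^{-n}$. These are incompatible once $2^{n}>2Cq_k$. If $n=mq_k$ with $m\ge 2$, then $\|n\theta\|=m\|q_k\theta\|\approx m2^{-q_k}$ while $\varphi(2^{-n})\approx 2^{-mq_k}$, so again there is no solution. The finitely many small $n$ not covered by these estimates are handled at the stage when they first appear: there the quantity $n\theta_k-\varphi_k(2^{-n})$ has a definite positive distance $\eta$ from $\mathbb{Z}$, since $\theta$ is irrational and $\varphi$ is small. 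I then require all subsequent changes in $\theta$ and $\varphi$ to total less than $\eta/2$ in the relevant values.

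The main obstacle is the bookkeeping in the inductive step. The exact equation at $q_k$ must be achieved while keeping $\varphi'(0)$, and more generally $\varphi(z)/z$, close to $1$ uniformly for small $z$, since this is what the exclusion estimate for multiples $mq_k$ relies on. At the same time the perturbations must be small enough to protect all earlier exclusion margins. My first attempt is to choose $q_k$ after $\varphi_{k-1}$ and the margins are fixed, making $q_k$ so large that all needed errors are below $2^{-q_k}$, and then to adjust $\theta$'s tail rather than $\varphi$ to hit the equation exactly. This uses $\varphi=z$ plus fixed earlier corrections, and only if that proves inconsistent across stages will I fall back to adjusting $\varphi$ via the interpolating factor.
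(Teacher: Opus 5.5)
\textbf{There is a genuine gap in the inductive step.} Your reduction via $f(z)=z\,e^{2\pi i\varphi(z)}$ to the condition $n\theta-\varphi(2^{-n})\in\mathbb{Z}$ is exactly the paper's starting point. The problem is that your scheme never produces a consistent pair $(\theta,\varphi)$.

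Each condition $q_i\theta-\varphi(2^{-q_i})\in\mathbb{Z}$ is an exact identity. Your later corrections to $\varphi$ carry the factor $\prod_{i<k}(z-2^{-q_i})$, so they vanish at $2^{-q_i}$. Hence any later change $\Delta\theta\neq 0$ shifts $q_i\theta-\varphi(2^{-q_i})$ by $q_i\Delta\theta\notin\mathbb{Z}$ and destroys every earlier identity.

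\begin{itemize}
\item Your ``first attempt'' (keep $\varphi$, adjust $\theta$'s tail) cannot work. With $\varphi$ frozen and $q_k\mid q_l$, the two conditions at $q_k,q_l$ are jointly solvable in $\theta$ only if $(q_l/q_k)\varphi(2^{-q_k})-\varphi(2^{-q_l})\in\mathbb{Z}$. That is a condition on $\varphi$ alone.
\item Fixing $\theta$ only ``modulo a tail of size $\ll 2^{-q_k}/q_k$'' never gives an exact identity at $q_k$.
\item Fixing $\theta$ completely in advance is not automatic either. Suppose $\theta$ is a Liouville number with $q_k\theta\equiv 2^{-q_k}+\eta_k \pmod 1$, where $\eta_k$ is not eventually $0$ and $\eta_k=o(2^{-mq_k})$ for every $m$. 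This is what the natural choice $\theta_{k+1}-\theta_k=2^{-q_k}/q_k$ with $q_{k+1}\ge 2^{q_k}$ gives. Then $\varphi(z)-z$ would vanish to infinite order at $0$ yet be nonzero at infinitely many $2^{-q_k}$, which the identity theorem forbids.
\end{itemize}

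So the fine structure of $\theta$ must be coupled to $\varphi$, and your proposal supplies no mechanism for this. A repair inside your framework would treat each $\delta_k=\delta_k(\theta)$ as an affine function of the final $\theta$, then choose $\theta$ by nested intervals on which all $|\delta_k(\theta)|$ stay small. That would have to be proved.

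The paper's key idea is to move both objects at once: $\theta_{j+1}=\theta_j+\delta_j$ and $H_{j+1}=H_j+\delta_jQ_j$. Here $Q_j$ is a polynomial with $Q_j(2^{-n_i})=n_i$ for all $i\le j$ and $Q_j(0)=Q_j'(0)=0$. Then $n_i\theta-H(2^{-n_i})$ changes by $\delta_j\bigl(n_i-Q_j(2^{-n_i})\bigr)=0$ for every earlier $i$. Meanwhile $\delta_j$ hits the new identity exactly, and $n_{j+1}$ is taken large enough that $|\delta_j|\max\{1,\|Q_j\|_1\}<1/(2^j-1)$, which gives convergence.

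\textbf{Your exclusion step also has gaps.} Since $|\theta-p_k/q_k|\approx 2^{-q_k}/q_k$, the bound $\|n\theta\|\ge 1/q_k-n|\theta-p_k/q_k|\ge 1/(2q_k)$ fails once $n\gtrsim 2^{q_k-1}$. That range lies inside $[q_k,q_{k+1})$ because $q_{k+1}\ge 2^{q_k}$. The identity $\|mq_k\theta\|=m\|q_k\theta\|$ breaks down in the same range. You would need to switch to $p_{k+1}/q_{k+1}$ there, with fractions in lowest terms. The ``definite margin $\eta$'' for small $n$ is asserted rather than derived, and irrationality of $\theta$ says nothing about the rational $\theta_k$.

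None of this case-by-case exclusion is needed:
\begin{itemize}
\item The paper shows $\theta\notin\mathbb{Q}$: otherwise $bH(2^{-n_i})\in\mathbb{Z}$ for all $i$, which is impossible since $H(z)=z+O(z^2)$.
\item Proposition \ref{2dim} then says the solution set is almost periodic.
\item An infinite progression $a+kb$ is impossible, since differencing gives $b\theta+o(1)\in\mathbb{Z}$, forcing $b\theta\in\mathbb{Z}$.
\end{itemize}
Hence the set is very sparse, with no need to identify it exactly or to control the growth of $q_k$.
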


This shows that for the endomorphism $\Phi\colon\mathbb{A}^2\to\mathbb{A}^2$ given by $(x,y)\mapsto(a_1x,a_2y)$, the set of $n\in\mathbb{Z}_+$ such that $\Phi^n(1,1)$ lies on the complex analytic curve $Y=f(X)$ is not a finite union of arithmetic progressions.

Proposition \ref{cplx} leads to the following question.

\begin{question}\label{cplxq}
Let $f(z_1,\dots,z_N)$ be a germ of analytic function near $(0,\dots,0)\in\mathbb{C}^N$, i.e. it is a power series with positive convergent radius. Let $a_1,\dots,a_N$ be complex numbers such that $|a_i|<1$ for each $i$. Find $M\geq0$ such that $(a_1^n,\dots,a_N^n)$ lies in the domain of definition of $f$ for every $n\geq M$. Does the zero set $\{n\geq M\mid f(a_1^n,\dots,a_N^n)=0\}$ have to be almost periodic?
\end{question}

This problem is harder in the archimedean case than that in the non-archimedean case. Indeed, one can show that the $p$-adic version of Question \ref{cplxq} has a positive answer by using Lemma \ref{psparse}(ii). On the contrary, this complex version seems to be related to certain interesting problems in diophantine approximation. We can only give a positive answer in the 2-dimensional case.

\begin{proposition}\label{2dim}
Question \ref{cplxq} has a positive answer if $N=2$.
\end{proposition}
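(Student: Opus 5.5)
The plan is to expand $F(n):=f(a_1^n,a_2^n)=\sum_{i,j}c_{ij}(a_1^ia_2^j)^n$ and use the Newton polygon of $f$ with respect to the weights $\alpha=-\log|a_1|$ and $\beta=-\log|a_2|$. The final step is a diophantine estimate that I have not yet closed; it is isolated at the end.

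\textbf{Degenerate and multiplicatively dependent cases.} If $a_1=0$ or $a_2=0$, or if $f\equiv0$, the problem reduces to a one-variable power series $h(w)$ evaluated at $w=a^n$ with $0<|a|<1$. Since $a^n\to0$ through distinct points and the zeros of a nonzero $h$ are isolated, the zero set is finite or everything.

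Next suppose $a_1,a_2\neq0$ are multiplicatively dependent. Then there are $s\ge1$, $b\in\mathbb{C}$ with $0<|b|<1$, and $p,q\ge1$ such that $a_1^s=\epsilon_1b^p$ and $a_2^s=\epsilon_2b^q$ with $\epsilon_i$ roots of unity. After enlarging $s$ we may take $\epsilon_1=\epsilon_2=1$. On each residue class $n=r+sm$ we get $F(n)=h_r(b^m)$ with $h_r(w)=f(a_1^rw^p,a_2^rw^q)$, which is a one-variable convergent series. Hence each class contributes a finite set or the whole class, so the zero set is a finite union of arithmetic progressions. Remark~\ref{ezrmk} handles the unions.

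\textbf{Independent case: reduction to a diophantine problem.} Assume $a_1,a_2$ are multiplicatively independent. By Weierstrass preparation, I write $f=z_1^{k}z_2^{l}\cdot(\text{unit})\cdot\prod_\nu g_\nu$. Each $g_\nu$ is either coprime to $z_1z_2$ or is irrelevant, since $z_1z_2$ never vanishes at $(a_1^n,a_2^n)$. Each branch of $\{g_\nu=0\}$ near $0$ has a Puiseux parametrisation $z_2=c\,z_1^{\lambda}(1+\sum_{k\ge1}c_kz_1^{k/e})$ with $\lambda\in\mathbb{Q}_{>0}$ and $c\neq0$.

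A zero $n$ on this branch forces $\zeta^n=c(1+\varepsilon_n)$, where $\zeta=a_2a_1^{-\lambda}$ for a fixed branch of the power. Here $\varepsilon_n=\sum_{k\geq 1} c_ka_1^{nk/e}$. For infinitely many zeros we need $|\zeta|=1$. Independence gives that $\zeta$ is not a root of unity, after the usual passage to residue classes modulo $e$.

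Let $k_0$ be the least index with $c_{k_0}\neq0$ and put $R=k_0\alpha/e$. Then $|\varepsilon_n|\asymp e^{-Rn}$ with two-sided bounds. If no such $k_0$ exists, the branch is $z_2=cz_1^\lambda$ exactly, and $\zeta^n=c$ has at most one solution. So the zeros $n_1<n_2<\cdots$ coming from one branch satisfy $|\zeta^{n_j}-c|\asymp e^{-Rn_j}$. In particular the distances to $c$ decrease geometrically.

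Now take consecutive zeros $n=n_j$ and $n'=n_{j+1}$ and put $d=n'-n$. Dividing the two relations gives
\[
|\zeta^{d}-1|\asymp e^{-Rn},
\]
again with two-sided bounds, since $e^{-Rn'}\ll e^{-Rn}$. Put $\theta=\arg\zeta/2\pi\notin\mathbb{Q}$ and $d_j=n_{j+1}-n_j$. Then $\|d_j\theta\|\asymp e^{-Rn_j}$, and the sequence $\|d_j\theta\|$ decreases extremely fast.

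\textbf{Main obstacle.} It remains to show that $\{n_j\}$ is very sparse, i.e.\ $n_{j+1}\ge \exp(c\,n_j)$ or something comparable; iterating such a bound beats every $\log^{(m)}$. What we have is $\|d_j\theta\|\le Ce^{-Rn_j}$ and $\|d_{j+1}\theta\|\le C e^{-Rn_{j+1}}$. Let $q_k\le d_{j+1}<q_{k+1}$ be continued fraction denominators of $\theta$. Best approximation gives $e^{-Rn_{j+1}}\gtrsim\|q_k\theta\|\ge 1/(2q_{k+1})$, which lower-bounds the next denominator rather than $d_{j+1}$ itself.

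To turn this into a lower bound on $d_{j+1}$, I would also use the lower bound $\|d_j\theta\|\gtrsim e^{-Rn_j}$ and the fact that $d_j$ is itself within $e^{-Rn_j}$ of the lattice. The idea is that $d_j$ and $d_{j+1}$ are two integers whose $\theta$-multiples are near $0$ at very different, explicitly controlled scales. The Ostrowski expansion of $d_{j+1}$ then forces $d_{j+1}\gtrsim \|d_j\theta\|^{-1}\gtrsim e^{Rn_j}$. The reason is that any integer $m<1/(2\|d_j\theta\|)$ with $\|m\theta\|\ll\|d_j\theta\|$ would produce, together with $d_j$, a contradiction with the three-distance structure.

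I expect this step to need the most care, and I would check it first on the model $\zeta^n=c(1+c_1a_1^{n})$. Once $d_{j+1}\ge e^{Rn_j}/C$ holds, we get $n_{j+2}\ge e^{Rn_j}/C$, so $|\{n_j\}\cap[0,N]|=O(\log^{*}N)$. Combining the finitely many branches and residue classes with Remark~\ref{ezrmk} then gives almost periodicity.
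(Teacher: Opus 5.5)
Your reductions are sound: the degenerate cases, the multiplicatively dependent case via residue classes, the Puiseux branches, $|\zeta|=1$ with $\theta\notin\mathbb{Q}$, and the two-sided bounds $\|d_j\theta\|\asymp e^{-Rn_j}$. The gap is exactly the step you flag. Nothing you wrote yields a lower bound on $d_{j+1}$, and the justification you sketch is false as stated. An integer $m<1/(2\|d\theta\|)$ with $\|m\theta\|\ll\|d\theta\|$ is perfectly compatible with $d$. For example, let $q$ be a convergent denominator of $\theta$ with $q\|q\theta\|<\frac{1}{20}$. Then $d=10q$ and $m=q$ satisfy $\|m\theta\|=\frac{1}{10}\|d\theta\|$ and $m<1/(2\|d\theta\|)$. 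So approximation data about $\theta$ alone (best approximations, Ostrowski, three-distance) cannot give $d_{j+1}\gtrsim e^{Rn_j}$. What excludes such configurations is the precise ratio $\|d_{j+1}\theta\|/\|d_j\theta\|\asymp e^{-Rd_j}$ combined with an exact integrality relation, and your sketch never uses either.

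The missing idea is to cancel $\theta$ exactly. Write $d_j\theta=p_j+\delta_j$ with $p_j\in\mathbb{Z}$ and $|\delta_j|=\|d_j\theta\|$. Then $d_{j+1}\delta_j-d_j\delta_{j+1}=d_jp_{j+1}-d_{j+1}p_j\in\mathbb{Z}$. By your bounds, $d_j|\delta_{j+1}|\lesssim d_je^{-Rd_j}e^{-Rn_j}\to 0$. So if $d_{j+1}|\delta_j|<\frac{1}{2}$, this integer vanishes, and $d_{j+1}=d_j|\delta_{j+1}|/|\delta_j|\lesssim d_je^{-Rd_j}$. Since $\theta\notin\mathbb{Q}$ and $\|d_j\theta\|\to 0$, we have $d_j\to\infty$, so the right-hand side is $<1$ for large $j$, a contradiction. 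Hence $d_{j+1}\geq 1/(2\|d_j\theta\|)\gtrsim e^{Rn_j}$ for all large $j$, which is the bound you need.

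This is the same device as the paper's Lemma~\ref{gaplem}. For three solutions $t<t+u<t+u+v$, the combination $vh(x^t)-(u+v)h(x^{t+u})+uh(x^{t+u+v})$ is an integer. It is forced to be $0$ when all three solutions lie in a window $[N,e^{c_0N}]$ with $c_0<-\log|x|$. The leading term $c_dq^t\bigl(v-(u+v)q^u+uq^{u+v}\bigr)$ then bounds $u$ and $v$. The paper's window version tolerates bounded clusters of solutions, so it needs neither $\theta\notin\mathbb{Q}$ nor your separate multiplicatively dependent case. After Puiseux it treats every branch $\bigl(a_2/a^d\bigr)^n=c_dg(a^n)$ uniformly.
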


The similar phenomenon of very sparse return sets also occurs when considering the DML problem for backward orbits.

\begin{theorem}\label{bkwdthm}
Let $X$ be a semiabelian variety over $\mathbb{C}$ and let $f\colon X\to X$ be a surjective morphism. Let $\{b_i\}_{i\geq0}$ be a sequence of points in $X(\mathbb{C})$ satisfying $f(b_i)=b_{i-1}$ for all $i\geq1$. Let $V$ be a closed subset of $X$. Then the set $\{n\in\mathbb{N}\mid b_n\in V\}$ is almost periodic.
\end{theorem}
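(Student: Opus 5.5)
This is a plan, not a proof. The strategy is to reduce the backward-orbit problem to a question about the forward orbit of a single point under an isomorphism-like map, and then apply the $p$-adic method. The key inputs are the structure theory of semiabelian varieties and the Mordell--Lang theorem for finitely generated groups (Vojta's theorem for semiabelian varieties). By a standard spreading-out argument, everything is defined over a finitely generated field $K$. A surjective endomorphism of a semiabelian variety has the form $f=\tau_c\circ\varphi$, with $\varphi$ a surjective group endomorphism and $\tau_c$ translation by $c$.

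First, reduce to the case where $V$ is irreducible and replace $f$ by an iterate $f^k$. Here we use Remark \ref{ezrmk}: the set $\{n\mid b_n\in V\}$ splits into $k$ residue classes, and each class is a backward orbit of $f^k$. We then want to linearize the dynamics. Choose a finitely generated subgroup $\Gamma\subset X(\mathbb{C})$ and ask whether all $b_n$ lie in the division hull of a finitely generated group.

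\begin{itemize}
\item If $\varphi$ is an isogeny, then $\varphi(b_n-b_{n+1}\cdot)$-type relations show that $b_n$ lies in the division group $\Gamma'$ of the group generated by $b_0$ and $c$.
\item In general, $\varphi$ is surjective with finite kernel, so $\varphi$ is in fact an isogeny. Hence every $b_n$ lies in the division hull $\Gamma'$ of a finitely generated group.
\end{itemize}

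By the Mordell--Lang conjecture for division hulls (McQuillan), $V\cap\Gamma'$ is a finite union of cosets $(y_j+B_j)\cap\Gamma'$, where the $B_j$ are semiabelian subvarieties. We may therefore assume $V=y+B$ is a coset.

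Second, we handle a coset $V=y+B$. Passing to the quotient $X/B$ is awkward, since $f$ need not preserve $B$. However, the sequence $f^m(B+y)$ for $m\ge0$ consists of cosets of the subvarieties $\varphi^m(B)$. Since there are only countably many semiabelian subvarieties, and dimension and degree are controlled, the family $\varphi^m(B)$ is eventually periodic. Replacing $f$ by an iterate, assume $\varphi(B)=B$. Then $f$ descends to $\bar f$ on $Y=X/B$, and the condition becomes $\bar b_n=\bar y$, where $\bar b_n$ is a backward orbit of $\bar f$ in $Y$.

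We then study when $\bar b_n=\bar y$. If this happens for $n<n'$, then $\bar f^{\,n'-n}(\bar y)=\bar y$, so $\bar y$ is periodic, and from that point the backward orbit is constrained. If $\bar y$ is periodic with period $q$, then $\bar b_{n+q}$ and $\bar b_n$ differ by an element of $\ker\bar\varphi^{\,q}$-type torsion, up to the translation part. This yields the essential source of sparseness: the return set is $\{n : \bar b_n=\bar y\}$ for a sequence living in a group where each step is a division.

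Third, we reduce to the key one-dimensional estimate. Write $d_n=\bar b_n-\bar y_n$, where $\bar y_n$ is the periodic backward orbit through $\bar y$. After an iterate, $\bar\varphi(d_{n+1})=d_n$, and the question is when $d_n=0$. If $d_N=0$, then $d_n=0$ for all $n\le N$. So the return set is either all of $\mathbb{N}$ in a residue class, or an initial segment, hence almost periodic; the trivial case also falls out.

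I expect the main obstacle to be making the second step rigorous. The claim that the subgroups $\varphi^m(B)$ are eventually periodic can fail, for example with $\varphi=[2]$ on a product where $B$ rotates among infinitely many subgroups. Instead I would work with the cosets appearing in $V$ directly. By Remark \ref{ezrmk} the sparse part genuinely arises from unions over infinitely many of the cosets, and I would try to bound how often $b_n$ lands in those cosets by comparing the heights $\hat h(b_n)\sim\deg(\varphi)^{-n}$-type decay against the minimal heights of the relevant torsion translates. That is where a $\log^{(m)}$ bound would emerge.
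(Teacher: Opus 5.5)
\textbf{Genuine gap.} The gap sits exactly where the very sparse returns come from.

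Your second step needs $\varphi(B)=B$ (after an iterate) in order to descend $f$ to $X/B$. As you suspect, this fails in general. Your $[2]$ example is not a counterexample, since $[2]$ preserves every subgroup. A real one is the paper's isogeny $(a,b)\mapsto(b,4b-4a)$ of $E\times E$ with $B=E\times\{0\}$, for which the abelian subvarieties $\varphi^m(B)$ are pairwise distinct.

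Worse, when $\varphi(B)=B$ your third step correctly shows the return set is a finite union of arithmetic progressions. So no reduction to that case can exist: Proposition \ref{inverse} gives a backward orbit on an abelian surface whose return set to a coset is infinite but very sparse. Your fallback of comparing canonical heights cannot close the gap either. In that example every $b_n$ is torsion, so $\hat h(b_n)=0$ and heights do not see the returns at all.

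A minor point: $b_n$ lies in the division hull of the $\mathrm{End}(X)$-span of $b_0,c$, not of $\langle b_0,c\rangle$, since $Nb_{n+1}=\psi(b_n-c)$ where $\psi\circ\varphi=[N]$. This is still of finite rank, so McQuillan applies.

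The missing ideas, as in the paper, are three.
\begin{enumerate}
\item \emph{Linearize instead of descending the dynamics.} Write $f=\tau_c\circ\varphi$, let $P$ be the monic minimal polynomial of $\varphi$ over $\mathbb{Z}$, and set $Q(x)=(x-1)P(x)$. Then the backward orbit satisfies the inverse recurrence $\tilde Q(b)=0$ with integer scalar coefficients. Such a relation survives any group homomorphism. So after McQuillan reduces $V$ to a coset $c+Y$, you may pass to $\pi(b-c)$ in $X/Y$ whether or not $\varphi$ preserves $Y$.
\item \emph{Reduce to torsion.} Skolem--Mahler--Lech in $\Gamma\otimes\mathbb{Q}$ restricts you to progressions on which the sequence is torsion. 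Via $0\to T_{\mathrm{tor}}\to X_{\mathrm{tor}}\to A_{\mathrm{tor}}\to0$ you are then reduced to an inverse recurrence in $(\mathbb{Q}/\mathbb{Z})^k$, i.e.\ Proposition \ref{bkwdseq}.
\item \emph{Work $p$-adically.} Split into finitely many $\mathbb{Q}_p/\mathbb{Z}_p$-components. Lift each to a genuine inverse recurrence $w$ in $\mathbb{Q}_p$ by compactness of $\mathbb{Z}_p^d$, using nested cosets built from the companion matrix. On residue classes, expand $w(n)=\sum P_i(n)\lambda_i^n$ $p$-adic analytically and group the terms by the valuation of $\lambda_i$.
\begin{enumerate}
\item If the smallest exponent occurring is nonnegative, $\{n\mid w(n)\in\mathbb{Z}_p\}$ is a finite union of progressions.
\item If it is negative, membership forces $v_\varpi(f_1(n))\ge n$ for a nonzero convergent power series $f_1$. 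That set is very sparse by Lemma \ref{psparse}.
\end{enumerate}
\end{enumerate}
This $p$-adic valuation estimate, not a height bound, is the source of the $\log^{(m)}$ sparsity.
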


It turns out that this problem has a close connection with the following ``backward Skolem--Mahler--Lech" problem for torsion groups.

\begin{proposition}\label{bkwdseq}
Let $m\geq1$. Let $x:\mathbb{N}\to\mathbb{Q}/\mathbb{Z}$ be an \emph{inverse monic linear recurrence sequence} satisfying
$$
a_mx(n+m)+\cdots+a_1x(n+1)+x(n)=0,\forall n\geq0
$$
for some integers $a_1,\dots,a_m$.
Then the zero set $\{n\in\mathbb{N}\mid x(n)=0\}$ is almost periodic.
\end{proposition}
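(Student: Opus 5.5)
The plan is to rewrite the recurrence as a backward orbit of a linear map on a torsion group. Then I reduce to finitely many primes, and at each such prime split the problem into a periodic part and a part whose denominators blow up. Put $v(n)=(x(n),\dots,x(n+m-1))\in(\mathbb{Q}/\mathbb{Z})^m$ and let $A\in M_m(\mathbb{Z})$ be the companion-type integer matrix with $v(n)=Av(n+1)$ for all $n$. Then $v(0)=A^nv(n)$, so $\{v(n)\}$ is a backward orbit of $A$ acting on $(\mathbb{Q}/\mathbb{Z})^m$. The zero set is $\{n\mid \ell(v(n))=0\}$, where $\ell$ is the first coordinate.

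\textbf{Step 1 (finitely many primes).} Decompose $(\mathbb{Q}/\mathbb{Z})^m=\bigoplus_p(\mathbb{Q}_p/\mathbb{Z}_p)^m$; this decomposition is $A$-stable, so each $p$-part $v_p(n)$ is itself a backward orbit. If $p\nmid \det A$, then $A$ is invertible over $\mathbb{Z}_p$, so $v_p(n)=A^{-n}v_p(0)$. If moreover $p$ does not divide the order of $v(0)$, then $v_p(n)=0$ for all $n$. Hence only the finitely many primes dividing $\det A\cdot\operatorname{ord}v(0)$ matter. Since $x(n)=0$ iff $\ell(v_p(n))=0$ for each of these $p$, Remark \ref{ezrmk} reduces the problem to a single prime $p$.

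\textbf{Step 2 (Fitting decomposition at $p$).} Over $\mathbb{Z}_p$, split $\mathbb{Z}_p^m=U\oplus W$ into $A$-stable summands. On $U$ the map $A$ is an automorphism (eigenvalues are $p$-adic units). On $W$ it is topologically nilpotent, say $A^m(W)\subseteq pW$. This induces $(\mathbb{Q}_p/\mathbb{Z}_p)^m=U'\oplus W'$ and $v_p(n)=u(n)+w(n)$.

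On $U'$ we have $u(n)=A^{-n}u(0)$, and $u(0)$ generates a finite $A$-stable subgroup on which $A$ acts bijectively. Hence $u(n)$, and therefore $\ell(u(n))$, is purely periodic in $n$, and its order is bounded by some $B$.

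On $W'$, either $w\equiv0$, in which case the zero set is periodic and we are done, or $w(n_0)\neq0$ for some $n_0$. In the latter case $A^{k}w(n_0+k)=w(n_0)$ together with $A^m(W)\subseteq pW$ forces $\operatorname{ord}w(n)\geq p^{\lfloor (n-n_0)/m\rfloor}$. So the orders of $w(n)$ grow exponentially.

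\textbf{Step 3 (the main obstacle).} For large $n$, the condition $x(n)=0$ becomes $\ell(w(n))=-\ell(u(n))$, whose right side lies in the fixed finite group $B^{-1}\mathbb{Z}/\mathbb{Z}$. I must show that the set of $n$ where $\ell(w(n))$ lands in a prescribed finite set is very sparse, or else eventually periodic. This is the hard step, because $w(n)$ is a choice of preimages and is not canonical.

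The approach I would try first is to lift to $\mathbb{Q}_p$. Write $w(n)=A^{-n}\big(\tilde w_0+\sum_{j<n}A^{j}c_j\big)$ modulo $W$, with $c_j\in W$. Then $\ell(w(n))\equiv\ell(A^{-n}\tilde w_0)+\sum_{j<n}\ell(A^{j-n}c_j)$. Because $A^{-1}$ is expanding on $W\otimes\mathbb{Q}_p$, each hit $\ell(w(n))\in B^{-1}\mathbb{Z}_p$ imposes a congruence on the digits $c_j$ modulo a power of $p$ that grows linearly in $n$. A second hit at $n'>n$ then forces the $c_j$ with $j<n$ to already determine the next hit only at distance exponentially large in $n$.

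This is the same mechanism as in Proposition \ref{padic}, where $p^{n_j}\mid n_{j+1}-n_j$, and Lemma \ref{psparse}(ii) is what I would invoke to convert it into very sparseness. Iterating, the gaps between successive zeros satisfy $n_{j+1}\geq p^{cn_j}$, which gives $|Z\cap[0,N]|=o(\log^{(k)}N)$ for every $k$.

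If the congruence instead holds for all large $n$ in some residue class, the zeros in that class form an arithmetic progression. Together with the periodic $U'$-part, the zero set at $p$ is then almost periodic, and Step 1 finishes the proof.
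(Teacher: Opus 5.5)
Your Steps 1 and 2 are correct. Step 1 even justifies something the paper only asserts, namely that just finitely many primes are involved. The gap is in Step 3, which you yourself call the main obstacle: it is only a heuristic, and as formulated it cannot be fed into Lemma \ref{psparse}. That lemma concerns a \emph{single} $p$-adic analytic function of $n$. Your expression $\ell(A^{-n}\tilde w_0)+\sum_{j<n}\ell(A^{j-n}c_j)$ instead depends on arbitrary, non-canonical digits $c_j$. Saying that each hit ``imposes a congruence on the $c_j$'' gives no control over where the next hit occurs. It also gives no reason for the dichotomy ``very sparse or eventually periodic on a residue class''.

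The missing idea is that the choices of preimages can be made coherent, so that the torsion-valued sequence becomes the reduction of an honest inverse l.r.s.\ over $\mathbb{Q}_p$. In your notation this takes one line. Write $\tilde w(n)=A^{-n}(\tilde w_0+\sum_{j<n}A^jc_j)$ for your lift. Since $A$ is topologically nilpotent on $W$, the series $\omega=\tilde w_0+\sum_{j\ge 0}A^jc_j$ converges in $W\otimes\mathbb{Q}_p$. Then $A^{-n}\omega-\tilde w(n)=\sum_{j\ge n}A^{j-n}c_j\in W$, so $w(n)$ is the image of $A^{-n}\omega$ for every $n$.

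The paper obtains the same exact lift for the whole sequence at once, with no Fitting decomposition, by compactness. Let $y_n$ be arbitrary lifts of $(x(n),\dots,x(n+m-1))$. The cosets $A^ny_n+A^n(\mathbb{Z}_p^m)$ are nonempty, compact and decreasing. Any point in their intersection yields $w\colon\mathbb{N}\to\mathbb{Q}_p$ that satisfies the recurrence exactly, with $w(n)-\tilde x(n)\in\mathbb{Z}_p$.

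Once the lift exists, sparseness does not come from a gap estimate $n_{j+1}\ge p^{cn_j}$ extracted from digit congruences. It comes from the closed form of $w$, which is the content of Corollary \ref{cor}:
\begin{enumerate}
\item Over a finite extension $K$, write $w(n)=\sum_iP_i(n)\lambda_i^n$ with $\lambda_i=\varpi^{a_i}\alpha_i$.
\item Pass to residue classes modulo some $D$, so that every $\alpha_i^{Dn}$ becomes a convergent power series in $n$.
\item Group the terms by the exponent $a_i$, and look at the smallest exponent whose coefficient function $f$ is not identically zero.
\end{enumerate}
If that exponent is negative, integrality forces $v_\varpi(f(n))\ge n$, which is very sparse by Lemma \ref{psparse}(i). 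Part (ii), which concerns exact vanishing, is not the statement you need. If the exponent is zero, integrality reduces to a congruence on $n$ modulo a power of $p$. If it is positive, all large $n$ qualify.

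In your setting, the periodic $U'$-part is constant on each residue class modulo its period, so it contributes a root of valuation $0$. The $W$-part has only roots of negative valuation. This analysis therefore produces exactly the dichotomy you wanted, but only after the exact lift is in place. Without the exact lift and this exponential-polynomial analysis, Step 3 is not a proof.
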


Professor Junyi Xie once made the following conjecture.

\begin{conjecture}\label{xie}(\cite[Conjecture 1.5*]{Xie18})
Let $X$ be a quasi-projective variety over $\mathbb{C}$ and let $f\colon X\to X$ be a finite morphism. Let $\{b_i\}_{i\geq0}$ be a sequence of points in $X(\mathbb{C})$ satisfying $f(b_i)=b_{i-1}$ for all $i\geq1$. Let $V$ be a closed subset of $X$. Then the set $\{n\in\mathbb{N}\mid b_n\in V\}$ is a finite union of arithmetic progressions.
\end{conjecture}

We give a counterexample of this conjecture.

\begin{proposition}\label{inverse}
There exists a sequence $x:\mathbb{N}\to\mathbb{Q}/\mathbb{Z}$ satisfying $4x(n+2)-4x(n+1)+x(n)=0$ for every $n\geq0$ for which the set $\{n\in\mathbb{N}\mid x(n)=0\}$ is infinite but very sparse. As a result, Conjecture \ref{xie} admits a counterexample for abelian surfaces.
\end{proposition}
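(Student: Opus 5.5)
The plan is to write the solution down explicitly using the $2$-adic structure of $\mathbb{Z}[1/2]/\mathbb{Z}\cong\mathbb{Q}_2/\mathbb{Z}_2$, and then realize it as a backward orbit on $E\times E$.

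\textbf{Step 1: the general shape of a solution.} The characteristic polynomial is $4t^2-4t+1=(2t-1)^2$, which has the double root $t=\tfrac12$. So both $2^{-n}$ and $n2^{-n}$ satisfy the recurrence exactly in $\mathbb{Q}_2$:
$$4\cdot\frac{n+2}{2^{n+2}}-4\cdot\frac{n+1}{2^{n+1}}+\frac{n}{2^n}=\frac{(n+2)-2(n+1)+n}{2^n}=0.$$
Hence, for any $A\in\mathbb{Z}_2$, the sequence
$$x(n)=\frac{A+n}{2^n}\bmod \mathbb{Z}_2\in\mathbb{Q}_2/\mathbb{Z}_2\cong\mathbb{Z}[1/2]/\mathbb{Z}\subset\mathbb{Q}/\mathbb{Z}$$
satisfies $4x(n+2)-4x(n+1)+x(n)=0$. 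Moreover, $x(n)=0$ if and only if $A+n\equiv0 \pmod{2^n}$.

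\textbf{Step 2: choosing $A$.} Define $n_1=1$ and $n_{j+1}=n_j+2^{n_j}$. For every $k$, all increments after $n_k$ are divisible by $2^{n_k}$, so the $n_j$ converge $2^{}$-adically. Set $A=-\lim_j n_j\in\mathbb{Z}_2$. Then $-A\equiv n_k \pmod{2^{n_k}}$ for every $k$, so every $n_k$ lies in the zero set.

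Conversely, suppose $n\geq1$ is a zero and $n_k\le n<n_{k+1}$. Since $-A\equiv n_{k+1}\pmod{2^{n_{k+1}}}$, we get $n\equiv n_{k+1}\pmod{2^n}$. But
$$0<n_{k+1}-n\le n_{k+1}-n_k=2^{n_k}\le 2^n,$$
and equality in the last inequality forces $n=n_k$. So the zero set is exactly $\{0\}\cup\{n_j\}_{j\ge1}$. This set is infinite, and because $n_{j+1}\ge 2^{n_j}$ the $n_j$ grow like a tower of exponentials. Therefore $|\{n_j\}\cap[0,n]|=O(\log^* n)$, which is $o(\log^{(m)}n)$ for every $m$, so the set is very sparse.

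\textbf{Step 3: the counterexample to Conjecture~\ref{xie}.}
\begin{enumerate}
\item Take any complex elliptic curve $E$ and fix an injective homomorphism $\iota\colon\mathbb{Q}_2/\mathbb{Z}_2\hookrightarrow E(\mathbb{C})$ onto its $2$-power torsion.
\item On the abelian surface $X=E\times E$, define $f(u,v)=(4u-4v,\,u)$. This is an isogeny (its matrix has determinant $4$), hence a finite surjective morphism.
\item Put $b_n=(\iota x(n),\iota x(n+1))$. Then $f(b_n)=(\iota(4x(n)-4x(n+1)),\iota x(n))=(\iota x(n-1),\iota x(n))=b_{n-1}$, using the recurrence.
\item With $V=\{0\}\times E$, injectivity of $\iota$ gives $\{n\mid b_n\in V\}=\{n\mid x(n)=0\}$.
\end{enumerate}
This set is infinite but very sparse, so it is not a finite union of arithmetic progressions.

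The only delicate point is the converse direction in Step 2, which shows that no extra zeros appear; it is handled by the inequality $n_{k+1}-n\le 2^n$, with equality only when $n=n_k$.
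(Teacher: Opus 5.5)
Your proof is correct and is essentially the paper's argument. The paper takes $n_0=0$, $n_{i+1}=n_i+2^{n_i+1}$ and $x(n)=\frac{n-r_{n+1}}{2^{n+1}}$, where the $r_k$ are the compatible residues of the $n_i$ (i.e.\ of their $2$-adic limit); this is exactly your sequence $\frac{A+m}{2^m}$ shifted by one index, and the paper rules out extra zeros by the same congruence-gap argument and uses the same isogeny of $E\times E$ with the coordinates swapped. One small slip: $\iota$ embeds $\mathbb{Q}_2/\mathbb{Z}_2$ \emph{into} the $2$-power torsion of $E$, not onto it, since that torsion is $(\mathbb{Q}_2/\mathbb{Z}_2)^2$; this does not affect the argument.
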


Proposition \ref{inverse} leads to the following question.

\begin{question}\label{inverseq}
In the setting of Conjecture \ref{xie}, does the return set have to be almost periodic?
\end{question}

We make a remark about sparse return sets. \cite{BGT15} provides a sparsity result by proving that the exceptional infinite set must be of Banach density zero. But as far as we know, only analytic arguments, especially the $p$-adic argument in \cite{BGKT10}, can prove the very-sparseness of exceptional sets. See \cite[Chapter 11]{BGT16} for more informations about this topic.

The structure of this article is as follows. We deal with the complex analytic DML problem in Section \ref{sec2} and consider the DML problem for backward orbits in Section \ref{sec3}.

\textbf{Statement on A.I. use.} We learned the proofs of Propositions \ref{cplx}, \ref{2dim}, \ref{bkwdseq}, and \ref{inverse} from ChatGPT 5.6 Sol. The whole article is human-typed.

\section{Complex analytic DML}\label{sec2}

In this section, we study the complex analytic DML problem mentioned in the Introduction. Namely, we prove Propositions \ref{cplx} and \ref{2dim}.

Firstly, we aim to prove Proposition \ref{2dim}. We need the following lemma.

\begin{lemma}\label{gaplem}
Let $g\in\mathbb{C}\{z\}$ be a convergent power series satisfying $g(0)=1$. Let $x$ be a number in the open unit disc $\mathbb{D}\subseteq\mathbb{C}$. Let $\lambda,C\in\mathbb{C}^{\times}$. Find $M\geq0$ such that $x^n$ lies in the convergence domain of $f$ for every $n\geq M$. Then the set $\{n\geq M\mid\lambda^n=Cg(x^n)\}$ is almost periodic.
\end{lemma}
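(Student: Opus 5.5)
The plan is to dispose of the degenerate situations first and then reduce everything to a Diophantine inequality for $\lambda$ on the unit circle. If $x=0$ or $g\equiv1$, the condition becomes $\lambda^n=C$ for all large $n$. This set is either empty, a single point, or (when $\lambda$ is a root of unity) a union of residue classes, hence almost periodic. So assume $x\neq0$ and $g$ nonconstant, and write $g(z)=1+cz^k+O(z^{k+1})$ with $c\neq0$.

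Since $g(x^n)\to1$, the case $|\lambda|\neq1$ gives only finitely many solutions: $|\lambda^n|$ tends to $0$ or $\infty$ while $|Cg(x^n)|\to|C|$. So assume $|\lambda|=1$, and also $|C|=1$ (otherwise there are again only finitely many solutions). Write $\lambda=e^{2\pi i\alpha}$ and $C=e^{2\pi i\gamma}$.

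If $\alpha$ is rational with denominator $q$, then on each residue class mod $q$ the quantity $\lambda^nC^{-1}$ is a constant $w$. If $w\neq1$ there are finitely many solutions in that class, because $g(x^n)\to1$. If $w=1$ we need $g(x^n)=1$, which happens for only finitely many $n$, since $g-1$ is nonzero analytic and $x^n\to0$ with $x^n\neq0$. So in all these cases the set is finite, and by Remark \ref{ezrmk} we are done.

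The main case is $\alpha$ irrational. Let $h=\frac{1}{2\pi i}\log g$ (principal branch) near $0$, so $h(z)=c'z^k(1+O(z))$ with $c'\neq0$. For $n$ large, $n$ is a solution iff $n\alpha-\gamma-m_n=e_n$ for some integer $m_n$, where $e_n:=h(x^n)$. Put $\rho=|x|^k<1$; then $|e_n|\asymp\rho^n$, and more precisely $e_{n'}/e_n=x^{k(n'-n)}(1+o(1))$.

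First I will show that gaps between consecutive solutions tend to infinity. If $n<n'$ are solutions, then $\|(n'-n)\alpha\|\le2A\rho^n$. Since $\|d\alpha\|\ge\delta_D>0$ for all $1\le d\le D$, this forces $n'-n>D$ once $n$ is large.

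The key step uses three consecutive solutions $n_1<n_2<n_3$. Eliminating $\alpha$ between the two difference relations gives
\[
(n_3-n_2)(m_2-m_1)-(n_2-n_1)(m_3-m_2)=(n_2-n_1)(e_3-e_2)-(n_3-n_2)(e_2-e_1).
\]
The left side is an integer, and the right side has modulus at most $4An_3\rho^{n_1}$. So either $n_3\ge\rho^{-n_1}/(4A)$, or both sides vanish. I expect ruling out the vanishing alternative to be the delicate point, and I will handle it using the fact that $e_1$ dominates. If the right side is $0$, then $(n_3-n_2)|e_2-e_1|=(n_2-n_1)|e_3-e_2|\le2(n_2-n_1)|e_2|$. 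Since the gaps are large, $|e_2|/|e_1|\le2|x|^{k(n_2-n_1)}$ is tiny, so $|e_2-e_1|\ge|e_1|/2$. This yields $n_3-n_2\le8(n_2-n_1)|x|^{k(n_2-n_1)}<1$ once $n_2-n_1$ is large, which is a contradiction.

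Hence, for all sufficiently large indices, the solution sequence satisfies $n_{j+2}\ge\rho^{-n_j}/(4A)\ge e^{\beta n_j}$ for some $\beta>0$. It follows by induction that the number of solutions in $[0,N]$ is $O(\log^{(m)}N)$ plus a constant multiple of $m$ for every $m$. In fact the count is bounded by roughly twice the inverse tower function of $N$, so it is $o(\log^{(m)}N)$ for each $m$. Therefore the set is very sparse, and in particular almost periodic.
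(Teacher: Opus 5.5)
Your proof is correct. Its core is the same as the paper's: take three solutions, eliminate $\alpha$ and $\gamma$ to get an integer of size $O(n_3\rho^{n_1})$, conclude that this integer vanishes unless $n_3$ is exponentially large in $n_1$, and then analyze the vanishing case using the dominance of the leading term of $h$.

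You differ from the paper in how you dispatch that vanishing case.

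\begin{itemize}
\item The paper works with arbitrary triples $t<t+u<t+u+v$ of solutions inside a window $[N,e^{c_0N}]$. Dividing the vanishing relation by $c_dq^t$, it shows that vanishing forces $u\le B_1$ and $v\le B_2$. So each window contains at most $B_1+B_2+1$ solutions. No case distinction on $\theta$ is needed; rational $\theta$ is covered as long as $h\not\equiv0$.
\item You instead split off rational $\alpha$, where you correctly show the set is finite. For irrational $\alpha$ you use $\|d\alpha\|\ge\delta_D$ to make gaps between solutions tend to infinity. That rules out the vanishing alternative for consecutive triples outright.
\end{itemize}

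Your route costs an extra case split and a Diophantine input, but it yields a clean growth law $n_{j+2}\ge e^{\beta n_j}$ for the solution sequence itself. The paper's route is more uniform.

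Your final counting sentence is muddled. A clean finish: iterating $n_{j+2}\ge e^{\beta n_j}$ gives $\log^{(m)}n_{j+2m}\ge\beta n_j+O_m(1)$. Since $n_j$ grows much faster than $j$, the number of solutions in $[0,N]$ is $o(\log^{(m)}N)$ for every $m$.
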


\begin{proof}
We may assume $x\neq0$ and $|\lambda|=|C|=1$ as other cases are easy to deal with.

We take logarithm and write $h(z)=\frac{1}{2\pi i}\log g(z)$. Then $h$ is an analytic function on a certain open disc $D(0,\varepsilon_0)$. We have $h(0)=0$ and $g(z)=e^{2\pi ih(z)}$ on that disc. Write $\lambda=e^{2\pi i\theta}$ and $C=e^{2\pi i\gamma}$ for some $\theta,\gamma\in\mathbb{R}$. We may enlarge $M$ so that $|x|^M<\varepsilon_0$ and then deduce that $\lambda^n=Cg(x^n)$ if and only if $h(x^n)-(n\theta-\gamma)\in\mathbb{Z}$ for $n\geq M$. We may assume that $h\not\equiv0$ without loss of generality as otherwise the solution set is a finite union of arithmetic progressions.

Now we choose three elements $t<t+u<t+u+v$ in $\{n\geq M\mid\lambda^n=Cg(x^n)\}$ and analyze them. Write $h(x^t)=t\theta-\gamma+m_0$, $h(x^{t+u})=(t+u)\theta-\gamma+m_1$, and $h(x^{t+u+v})=(t+u+v)\theta-\gamma+m_2$ for some integers $m_0,m_1,m_2$. We cancel $\theta,\gamma$ and get
$$
vh(x^t)-(u+v)h(x^{t+u})+uh(x^{t+u+v})\in\mathbb{Z}.
$$
We pick $a>0$ and $M_1\geq M$ such that $|h(x^s)|\leq a|x|^s$ for every $s\geq M_1$. Pick an element $c_0$ in the interval $(0,-\log|x|)$. Since $|vh(x^t)-(u+v)h(x^{t+u})+uh(x^{t+u+v})|\leq2(u+v)a|x|^t$ for every $t\geq M_1$, we can find $N_0\geq M_1$ such that the following statement holds. For every $N\geq N_0$ and every triple of solutions $t<t+u<t+u+v$ which lies in the interval $[N,e^{c_0N}]$, we have
$$
vh(x^t)-(u+v)h(x^{t+u})+uh(x^{t+u+v})=0.
$$

Let $d\geq1$ be the vanishing order of $h$ at 0, write $h(z)=c_dz^d+O(z^{d+1})$, and put $q=x^d$. Find $b>0$ and $N_1\geq N_0$ such that $|h(x^s)-c_dq^s|\leq b|q|^s|x|^s$ for every $s\geq N_1$. Find $B_1$ sufficiently large such that $|q|^u<u|q|^u<\frac{1}{4}$ for every $u\geq B_1$ and put $B_2=\lceil\frac{2B_1+1}{1-|q|}\rceil$. Pick $N_2\geq N_1$ such that $(e^{c_0}|x|)^N<\frac{|c_d|}{8b}$ for every $N\geq N_2$. Then we claim that for every $N\geq N_2$ and every triple of solutions $t<t+u<t+u+v$ which lies in the interval $[N,e^{c_0N}]$, we have
$$
u\leq B_1\ \text{and}\ v\leq B_2.
$$

Indeed, dividing the equation $vh(x^t)-(u+v)h(x^{t+u})+uh(x^{t+u+v})=0$ by $c_dq^t$, we get
$$
|v(1-q^u)-uq^u(1-q^v)|=|v-(u+v)q^u+uq^{u+v}|\leq\frac{2(u+v)}{|c_d|}b|x|^t<\frac{2b}{|c_d|}(e^{c_0}|x|)^N<\frac{1}{4}.
$$
Since $\text{LHS}\geq1-|q|^u-2u|q|^u$, we get $u\leq B_1$. Then $\text{LHS}\geq v(1-|q|)-2B_1$ and hence $v\leq B_2$.

From this claim, we deduce that there are at most $B_1+B_2+1$ solutions in any such interval $[N,e^{c_0N}]$. Therefore, the set $\{n\geq M\mid\lambda^n=Cg(x^n)\}$ is very sparse. Thus we finish the proof.
\end{proof}

\proof[Proof of Proposition \ref{2dim}]
Let $f$ be an element in the maximal ideal of $\mathbb{C}\{x,y\}$. As the ring $\mathbb{C}\{x,y\}$ is a UFD, we may assume that $f$ is an irreducible element. Then by the theory of Puiseux series (see for example \cite[Theorem 1.8.3]{CA00}), we know that locally there are two possibilities about the analytic germ defined by $f$: either it is the $y$-axis, or it can be parametrized by $(t^q,h(t))$ for some $q\in\mathbb{Z}_+$ and $h\in\mathbb{C}\{t\}$ with zero constant term. We may focus on the second case and assume $h\not\equiv0$. We may also assume $a_1a_2\neq0$.

Let $a$ be a $q$-th root of $a_1$. Take $M_0$ sufficiently large such that for every $n\geq M_0$, we have
$$
f(a_1^n,a_2^n)=0\iff a_2^n=h(\zeta a^n)
$$
for some $q$-th root of unity $\zeta$. Since the class of almost periodic sets is closed with respect of taking finite union, we may fix a $\zeta$ and analyze the set $\{n\geq M_0\mid a_2^n=h(\zeta a^n)\}$.

Write $h(\zeta z)=c_dz^dg(z)$ for some $d\in\mathbb{Z}_+$, $c_d\neq0$, and $g\in\mathbb{C}\{z\}$ which satisfies $g(0)=1$. Then the equality $a_2^n=h(\zeta a^n)$ is equivalent to
$$
\left(\frac{a_2}{a^d}\right)^n=c_dg(a^n).
$$
Then we may use Lemma \ref{gaplem} to conclude that $\{n\geq M_0\mid a_2^n=h(\zeta a^n)\}$ is almost periodic and hence finish the proof.
\endproof

Next, we prove Proposition \ref{cplx}.

\proof[Proof of Proposition \ref{cplx}]
We shall put $f(z)=ze^{2\pi iH(z)}$ where $H$ is a holomorphic function on $\mathbb{D}$ with a real Taylor expansion. We shall construct a sequence $(\theta_j)_{j\geq1}$ of rational numbers and a sequence of $\mathbb{Q}$-coefficient polynomials $(H_j)_{j\geq1}$, and then let $\theta$ and $H$ be their limit, respectively.

Start with $n_1=1$, $\theta_1=\frac{1}{2}$, and $H_1(z)=z$. We inductively construct an increasing sequence of positive integers $(n_i)_{i\geq1}$, a sequence of integers $(k_i)_{i\geq1}$, together with $(\theta_j)_{j\geq1}$ and $(H_j)_{j\geq1}$ promised above, such that $n_i\theta_j-H_j(2^{-n_i})=k_i$ holds for every $1\leq i\leq j$. Therefore, we have $k_1=0$. In the following, we denote $x_i=2^{-n_i}$ for simplicity. 

Suppose we have finished the construction for $i\leq j$. We proceed at the level $j+1$. Define auxiliary polynomial
$$
Q_j(z)=z^2\sum\limits_{i=1}^{j}\frac{n_i}{x_i^2}\prod\limits_{1\leq r\leq j,r\neq i}\frac{z-x_r}{x_i-x_r}
$$ 
which satisfies $Q_j(0)=Q_j'(0)=0$ and $Q_j(x_i)=n_i$ for $1\leq i\leq j$. Define
$$
n_{j+1}=1+\max\left\{n_j,\left\lceil2^j\left(1+\|Q_j\|_1\right)\right\rceil\right\}
$$
where $\|Q_j\|_1$ stands for the sum of the absolute values of the coefficients of $Q_j$. Set
$$
A=n_{j+1}\theta_j-H_j(x_{j+1}),\ k_{j+1}=\left\lfloor A\right\rfloor,\ D=n_{j+1}-Q_j(x_{j+1}),\ \delta_j=\frac{k_{j+1}-A}{D}
$$
and finally
$$
\theta_{j+1}=\theta_j+\delta_j,\ H_{j+1}=H_j+\delta_jQ_j.
$$
We see that everything is defined over $\mathbb{Q}$ through this inductive process. Also, we have $D>2^j+(2^j-1)\|Q_j\|_1$ and get the estimation $|\delta_j|\max\{1,\|Q_j\|_1\}<\frac{1}{2^j-1}$.

We check $n_i\theta_{j+1}-H_{j+1}(x_i)=k_i$ for every $1\leq i\leq j+1$. If $i\leq j$, then we have
\begin{align*}
n_i\theta_{j+1}-H_{j+1}(x_i)
&= n_i(\theta_j+\delta_j)-(H_j(x_i)+\delta_jQ_j(x_i))=n_i\theta_j-H_j(x_i)=k_i.
\end{align*}
Here we use $Q_j(x_i)=n_i$ and the final step is an inductive hypothesis at level $j$. For $i=j+1$, we have
\begin{align*}
n_{j+1}\theta_{j+1}-H_{j+1}(x_{j+1})
&=n_{j+1}(\theta_j+\delta_j)-(H_j(x_{j+1})+\delta_jQ_j(x_{j+1}))\\
&=A+n_{j+1}\delta_j-\delta_jQ_j(x_{j+1})=k_{j+1}.
\end{align*}
Hence we finish the check.

Now we want to put
$$
\theta=\lim\limits_{j\to\infty}\theta_j\ \text{and}\ H(z)=\lim\limits_{j\to\infty}H_j(z).
$$
We need to argue the convergence. Recall that at each step $j$, we have $\max\{|\delta_j|,\|\delta_jQ_j\|_1\}<\frac{1}{2^j-1}$. This guarantees that $\theta\in\mathbb{R}$ exists and $H(z)$ is an $\mathbb{R}$-coefficient holomorphic function on $\mathbb{D}$. Also, we see that $H(z)=z+O(z^2)$ from the construction. Furthermore, since $n_i\theta_j-H_j(2^{-n_i})=k_i$ for every $j\geq i$, we have $n_i\theta-H(2^{-n_i})=k_i$ for every $i$ by taking the limit.

We show that $\theta\notin\mathbb{Q}$. Otherwise, there exists $b\in\mathbb{Z}_+$ such that $bH(2^{-n_i})\in\mathbb{Z}$ for every $i$. But this is absurd for large $i$. Therefore $\theta\notin\mathbb{Q}$.

Now we put $f(z)=ze^{2\pi iH(z)}$ and consider the equation $f(a_1^n)=a_2^n$ where $a_1=\frac{1}{2}$ and $a_2=\frac{1}{2}e^{2\pi i\theta}$. We see that this equation is equivalent to $n\theta-H(2^{-n})\in\mathbb{Z}$ and hence the set $\{n\in\mathbb{Z}_+\mid f(a_1^n)=a_2^n\}$ contains every $n_i$. Thus it is infinite. In order to prove that it is very sparse, we only need to conclude that it does not contain any infinite arithmetic progression by Proposition \ref{2dim}.

Suppose the contrary. Then we can find $a,b\in\mathbb{Z}_+$ such that $(a+kb)\theta-H(2^{-(a+kb)})=m_k\in\mathbb{Z}$ for every $k\geq0$. By taking difference, we get $b\theta+H(2^{-(a+kb)})-H(2^{-(a+(k+1)b)})\in\mathbb{Z}$, which is absurd for large $k$ as $\theta$ is irrational. Hence we finish the proof.
\endproof

\section{DML for backward orbits}\label{sec3}

In this section, we study the DML problem for backward orbits. We prove Theorem \ref{bkwdthm} and Propositions \ref{bkwdseq} and \ref{inverse}.

Firstly, we reduce the backward DML problem for semiabelian varieties to the backward Skolem--Mahler--Lech problem.

\begin{lemma}\label{reduce}
Proposition \ref{bkwdseq} implies Theorem \ref{bkwdthm}.
\end{lemma}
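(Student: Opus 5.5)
We reduce Theorem \ref{bkwdthm} to Proposition \ref{bkwdseq} in three stages: first reduce the problem for general $V$ to the case where $V$ is a translate of an algebraic subgroup, then to the question of when a backward orbit lies in a fixed subgroup, and finally to a torsion inverse recurrence in a finite product of copies of $\mathbb{Q}/\mathbb{Z}$.

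\textbf{Setup.} Write $f=\tau_c\circ\varphi$ with $\varphi$ a surjective algebraic group endomorphism and $\tau_c$ the translation by $c\in X(\mathbb{C})$. Let $Z_n$ be the Zariski closure of $\{b_i\mid i\ge n\}$. These closed sets decrease, so they stabilize at some $Z$. Because $f(b_{i+1})=b_i$, we have $f(Z)=Z$ for the stabilized closure. We would like to know that every component of $Z$ is a translate of a semiabelian subvariety. For this we use the structure of the backward orbit: by the Mordell--Lang theorem for semiabelian varieties (Faltings--Vojta--McQuillan), applied to the finitely generated divisible hull of the group generated by $c$, $b_0$ and their $\varphi$-iterates, all $b_n$ lie in the $\varphi$-stable subgroup $\Gamma$. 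This $\Gamma$ is the division group of a finitely generated $\varphi$-module, and it has finite rank. Mordell--Lang for finite-rank groups then shows that $V\cap\Gamma$ is a finite union of cosets $\gamma+(B\cap\Gamma)$ with $B$ a semiabelian subvariety. Hence
\[
\{n\mid b_n\in V\}=\bigcup_k\{n\mid b_n\in\gamma_k+B_k\}.
\]
By Remark \ref{ezrmk} it suffices to treat a single coset $\gamma+B$.

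\textbf{Coset case.} Pass to the quotient $\pi\colon X\to X/B$. This quotient is only compatible with the dynamics if $\varphi(B)\subseteq B$, which need not hold. To handle this, let $B'=\bigcap_j\varphi^{-j}(B)^\circ$, or better, work in $\Gamma$ directly. Concretely, $\Gamma\otimes\mathbb{Q}$ is a finite-dimensional $\mathbb{Q}$-vector space carrying a linear map $\varphi$. The recursion $b_{n-1}=\varphi(b_n)+c$ says that $(b_n)$ is an inverse sequence there. Condition $b_n\in\gamma+B$ splits into two parts:
\begin{enumerate}
\item a condition on the image in $(\Gamma/(B\cap\Gamma))\otimes\mathbb{Q}$;
\item a torsion condition.
\end{enumerate}
For the free part, take the characteristic polynomial $P$ of $\varphi$. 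Since $\varphi$ is surjective, $P(0)\neq0$, so the sequence satisfies an inverse linear recurrence over $\mathbb{Q}$, which after clearing denominators becomes
\[
a_m y(n+m)+\dots+a_1y(n+1)+a_0y(n)=0.
\]
Applying the linear functional given by projection modulo $B$ gives a vector sequence in $\mathbb{Q}^r$. For the torsion part we get sequences valued in $(\mathbb{Q}/\mathbb{Z})^{s}$, and intersecting the zero sets over coordinates is allowed by Remark \ref{ezrmk}.

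\textbf{Main obstacle.} The main difficulty is normalizing to exactly the form of Proposition \ref{bkwdseq}, where the coefficient of $x(n)$ is $1$ and the values lie in $\mathbb{Q}/\mathbb{Z}$. Over $\mathbb{Q}$, a linear recurrence sequence in characteristic zero has a zero set that is a finite union of arithmetic progressions by Skolem--Mahler--Lech. For the torsion part, write the relation $y(n)=-\sum a_k y(n+k)/a_0$ in $\mathbb{Q}/\mathbb{Z}$. Dividing by $a_0$ is not well defined on $\mathbb{Q}/\mathbb{Z}$, so I would try instead to decompose into the primes dividing $a_0$ and the primes not dividing $a_0$:
\begin{enumerate}
\item Away from $a_0$, multiplication by $a_0$ is invertible on the prime-to-$a_0$ part of $\mathbb{Q}/\mathbb{Z}$, so the sequence becomes a genuinely monic inverse recurrence as required by Proposition \ref{bkwdseq}.
\item At the $p$-primary parts with $p\mid a_0$, I would use a $p$-adic argument together with Lemma \ref{psparse}(ii) to show that the zero set is almost periodic.
\end{enumerate}
Combining the two parts through Remark \ref{ezrmk} would finish the proof.
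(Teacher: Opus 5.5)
There is a genuine gap, and it sits exactly at the step you call the main obstacle. That obstacle exists only because you never extract the right recurrence.

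\textbf{The missing recurrence.} The backward orbit satisfies an inverse recurrence with \emph{integer} coefficients and coefficient $1$ on $b_n$, and this relation holds in $X(\mathbb{C})$ itself. Write $f=\tau_c\circ\varphi$. Since $\mathrm{End}(X)$ is a finitely generated $\mathbb{Z}$-module, $\varphi$ is killed by a monic $P(x)=x^d+p_{d-1}x^{d-1}+\dots+p_0\in\mathbb{Z}[x]$. Put $\delta_n=b_n-b_{n+1}$. Then $\delta_n=\varphi(\delta_{n+1})$, so $0=P(\varphi)(\delta_{n+d})=\delta_n+p_{d-1}\delta_{n+1}+\dots+p_0\delta_{n+d}$. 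That is, $\tilde Q(b)=0$ for $Q=(x-1)P$.

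\textbf{Why your normalization goes wrong.} The leading coefficient $1$ of $P$ lands on the earliest term $\delta_n$, and $P(0)$ multiplies the last term. So the non-unit $a_0$ on $y(n)$, and the ``clearing denominators'' step, come from working over $\mathbb{Q}$ instead of using integrality of $\varphi$ over $\mathbb{Z}$. Worse, the characteristic polynomial of $\varphi$ on $\Gamma\otimes\mathbb{Q}$ only yields a relation modulo torsion. Your $\Gamma$ contains $X_{\mathrm{tor}}$, so that relation cannot be used for the torsion part at all. Your proposal never says where the recurrence for the $(\mathbb{Q}/\mathbb{Z})^s$-valued sequences comes from. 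It also never handles the translation $c$, which is what the factor $x-1$ absorbs. The fact that all $b_n$ lie in a finite-rank group also follows from this recurrence, not from Mordell--Lang.

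\textbf{How the correct recurrence finishes the proof.} With $\tilde Q(b)=0$ in hand, your worry about whether $\varphi(B)\subseteq B$ disappears, because every group homomorphism preserves a relation with integer coefficients. The following are then all inverse l.r.s.\ of exactly the form in Proposition~\ref{bkwdseq}:
\begin{enumerate}
\item the sequence $b_n-\gamma$, still killed by $\tilde Q$ since $\tilde Q(1)=0$;
\item its image under the quotient $X\to X/B$;
\item its coordinates in the torsion subgroup, a finite sum of copies of $\mathbb{Q}/\mathbb{Z}$, once you have used Skolem--Mahler--Lech in $\Gamma\otimes\mathbb{Q}$ and Remark~\ref{ezfct}(ii) to restrict to the progressions on which it is torsion.
\end{enumerate}
This is the paper's argument.

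\textbf{The fallback for $p\mid a_0$.} This step is not a consequence of Proposition~\ref{bkwdseq}, so it would not prove the lemma as stated. It is also false in the generality you state it. The relation $p\,y(n)+p\,y(n+1)=0$ holds for every sequence with values in $\frac{1}{p}\mathbb{Z}/\mathbb{Z}$, and such a sequence can have an arbitrary zero set. Any $p$-adic argument there would have to use where the sequence comes from, which means rediscovering the monic relation above.
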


The proof of Lemma \ref{reduce} will make use of the full Mordell--Lang theorem \cite{McQ95} and the Skolem--Mahler--Lech theorem \cite{Lec53}. Since we will mainly deal with linear recurrence sequences in certain abelian groups, we introduce the following notations.

\begin{notation}
Let $P(x)=\sum\limits_{i=0}^{d}b_ix^i\in\mathbb{Z}[x]$ be a polynomial with $b_db_0\neq0$. Let $G$ be an abelian group and let $a\colon\mathbb{N}\to G$ be a sequence of elements of $G$.
\begin{enumerate}
\item
The sequence $P(a)\colon\mathbb{N}\to G$ is defined by $P(a)(n)=\sum\limits_{i=0}^{d}b_ia(n+i)$.

\item
The inverse polynomial $\tilde{P}$ of $P$ is defined by $\tilde{P}(x)=\sum\limits_{i=0}^{d}b_{d-i}x^i$.

\item
We say $a$ is a \emph{linear recurrence sequence (l.r.s.)} in $G$ if $P(a)=0$ for some \emph{monic} polynomial $P$. We say $a$ is an \emph{inverse l.r.s.} if $\tilde{P}(a)=0$ for some monic polynomial $P$.
\end{enumerate}
\end{notation}

\begin{remark}\label{ezfct}
We collect some basic facts. Let $P(x),Q(x)\in\mathbb{Z}[x]$ be polynomials and let $a\colon\mathbb{N}\to G$ be a sequence.
\begin{enumerate}
\item
We have $(P+Q)(a)=P(a)+Q(a)$ and $(PQ)(a)=P(Q(a))$.

\item
Suppose $a$ is an l.r.s (resp. inverse l.r.s.). Let $k$ and $l$ be natural numbers and consider the subsequence $a'$ of $a$ defined by $a'(n)=a(k+ln)$. Then $a'$ is also an l.r.s (resp. inverse l.r.s.).
\end{enumerate}
\end{remark}

\proof[Proof of Lemma \ref{reduce}]
Firstly, notice that the backward orbit $b\colon\mathbb{N}\to X(\mathbb{C})$ in Theorem \ref{bkwdthm} is an inverse l.r.s.. Indeed, let us write the endomorphism $f$ as the composition of a self-isogeny $f_0$ with a translation. Let $P(x)\in\mathbb{Z}[x]$ be the (monic) minimal polynomial of $f_0$ and denote $Q(x)=(x-1)P(x)$. Then one can check that $\tilde{Q}(b)=0$. Therefore, we may focus on the inverse l.r.s. in $X(\mathbb{C})$. Namely, we want to prove that for every inverse l.r.s. $a\colon\mathbb{N}\to X(\mathbb{C})$ and every closed subset $V\subseteq X$, the return set $\{n\in\mathbb{N}\mid a(n)\in V\}$ is almost periodic. In the following procedure, we will often implicitly use Remark \ref{ezrmk}.

We may assume that $V$ is irreducible and $\mathrm{Im}(a)\cap V$ is dense in $V$. Notice that $\mathrm{Im}(a)$ is contained in a finite rank subgroup of $X(\mathbb{C})$. Hence we may use the full Mordell--Lang theorem \cite{McQ95} to conclude that $V$ is a translation of a semiabelian subvariety of $X$. We write $V=c+Y$ for some $c\in X(\mathbb{C})$ and semiabelian subvariety $Y\subseteq X$. Put $a'(n)=a(n)-c$ and consider the quotient map $\pi\colon X\to X/Y$. Since $a(n)\in V$ is equivalent to $\pi(a'(n))=0$ and $a'$ (and hence $\pi(a')$) is also an inverse l.r.s., we may assume $V=\{0\}$ in our task.

Next, we argue that we only need to deal with sequences of torsion points. Namely, we shall reduce to the case that $\mathrm{Im}(a)\subseteq X_{\mathrm{tor}}$. By taking Remark \ref{ezfct}(ii) into account, we only need to show that $\{n\in\mathbb{N}\mid a(n)\in X_{\mathrm{tor}}\}$ is a finite union of arithmetic progressions. Find a finite rank group $\Gamma\subseteq X(\mathbb{C})$ containing $\mathrm{Im}(a)$. Then $a(n)\in X_{\mathrm{tor}}$ is equivalent to ``$a(n)=0$ in the finite dimensional vector space $\Gamma_{\mathbb{Q}}$". By viewing $a$ as an (inverse) l.r.s. in $\Gamma_{\mathbb{Q}}$, the Skolem--Mahler--Lech theorem \cite{Lec53} helps us conclude that we can assume $\mathrm{Im}(a)\subseteq X_{\mathrm{tor}}$.

The exact sequence $0\to T\to X\to A\to0$ induces an exact sequence $0\to T_{\mathrm{tor}}\to X_{\mathrm{tor}}\to A_{\mathrm{tor}}\to0$ of torsion groups, where $T$ is a torus and $A$ is an abelian variety. Notice that both $T_{\mathrm{tor}}$ and $A_{\mathrm{tor}}$ are isomorphic to a finite direct sum of $\mathbb{Q}/\mathbb{Z}$. Denote $p$ as the projection map $X_{\mathrm{tor}}\to A_{\mathrm{tor}}$. Using Proposition \ref{bkwdseq} towards the inverse l.r.s. $p(a)$ and again using Remark \ref{ezfct}(ii), we may further assume $\mathrm{Im}(a)\subseteq T_{\mathrm{tor}}$. Using Proposition \ref{bkwdseq} again, we finish the proof.
\endproof

\begin{remark}
One can show that the inverse direction of Lemma \ref{reduce} is also valid. We will illustrate this direction in the proof of Proposition \ref{inverse}.
\end{remark}

Next, we prove Proposition \ref{bkwdseq}. The first observation is that there is a natural isomorphism $
\mathbb{Q}/\mathbb{Z}\stackrel{\sim}\longrightarrow\bigoplus\limits_{p\ \text{prime}}\mathbb{Q}_p/\mathbb{Z}_p$. Hence we can deal with every $\mathbb{Q}_p/\mathbb{Z}_p$ separately. We need the following sparsity result.

\begin{lemma}\label{psparse}
Let $K/\mathbb{Q}_p$ be a finite extension. By a \emph{convergent power series}, we mean an element in $\left\{\sum\limits_{i=0}^{\infty}a_ix^i\in\mathcal{O}_K[[x]]\ \Big|\ \lim\limits_{i\to\infty}|a_i|=0\right\}$. Let $\varpi$ be a uniformizer in $K$.
\begin{enumerate}
\item
Let $f$ be a nonzero convergent power series. Then the set $\{n\in\mathbb{N}\mid v_{\varpi}(f(n))\geq n\}$ is very sparse.

\item
Let $(g_i)_{i\geq1}$ be a sequence of nonzero convergent power series and let $(a_i)_{i\geq1}$ be a strictly increasing sequence of integers. Then $\left\{n\in\mathbb{N}\ \Big|\ \sum\limits_{i=1}^{\infty}\varpi^{a_in}g_i(n)=0\right\}$ is very sparse.
\end{enumerate}
\end{lemma}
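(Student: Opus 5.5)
The plan is to prove (i) by making $v_{\varpi}(f(n))$ explicit through the $p$-adic Weierstrass preparation theorem, and then to deduce (ii) from (i) by a valuation comparison. Throughout, $v=v_{\varpi}$ is normalized so that $v(\varpi)=1$, and $e$ is the ramification index of $K/\mathbb{Q}_p$.

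\textbf{Step 1: factor $f$.} Since $f$ is a nonzero convergent power series, write $f=\varpi^{s}f_0$, where $f_0$ has some coefficient that is a unit. By the Weierstrass preparation theorem for the Tate algebra $\mathcal{O}_K\langle x\rangle$, we can write $f_0=P\cdot u$. Here $P\in\mathcal{O}_K[x]$ is a distinguished polynomial of degree $d$, and $u$ is a unit of $\mathcal{O}_K\langle x\rangle$, so $|u(y)|=1$ for every $y\in\mathcal{O}_K$. Factor $P(x)=\prod_{j=1}^{d}(x-\alpha_j)$ over $\mathbb{C}_p$; all roots satisfy $|\alpha_j|\leq1$. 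Then for every $n\in\mathbb{N}$,
$$
v(f(n))=s+\sum_{j=1}^{d}v(n-\alpha_j).
$$
If $d=0$, then $v(f(n))=s$ is constant, and the set in question is finite. So assume $d\geq1$.

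\textbf{Step 2: pigeonhole onto one root.} If $v(f(n))\geq n$, then some $j$ satisfies $v(n-\alpha_j)\geq (n-s)/d$. This splits the set into $d$ pieces $S_j$. By Remark \ref{ezrmk}, it suffices to show that each $S_j$ is very sparse.

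\textbf{Step 3: iterated-exponential growth within one piece.} This is the key step. Take $n<m$ in $S_j$. The ultrametric inequality gives
$$
v(m-n)\geq\min\{v(n-\alpha_j),v(m-\alpha_j)\}\geq\frac{n-s}{d}.
$$
On the other hand, $v(m-n)=e\,v_p(m-n)$, and $m-n$ is a positive integer. Hence $p^{\lceil (n-s)/(de)\rceil}$ divides $m-n$, so $m\geq n+p^{(n-s)/(de)}\geq e^{cn}-C'$ for some constants $c,C'>0$.

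Enumerate $S_j$ increasingly as $n_1<n_2<\cdots$. Then $n_{k+1}\geq e^{cn_k}-C'$. Consequently $n_k$ eventually dominates any fixed tower of exponentials of height $\sim k$. It follows that $|S_j\cap[0,N]|=O(\log^{*}N)$, which is $o(\log^{(m)}N)$ for every $m$. This proves (i). The only real input is the Weierstrass factorization and the fact that $|u|=1$ on $\mathcal{O}_K$. I expect the main care to be needed in Step 1: stating the preparation theorem correctly over $\mathcal{O}_K$ (rather than over $\mathbb{Z}_p$), with the extracted power $\varpi^{s}$.

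\textbf{Step 4: deduce (ii).} Let $n\geq1$ be in the set; the point $n=0$ is harmless. Every $g_i(n)$ lies in $\mathcal{O}_K$, and $a_i\geq a_2\geq a_1+1$ for all $i\geq2$. Hence
$$
a_1n+v(g_1(n))=v\Bigl(\sum_{i\geq2}\varpi^{a_in}g_i(n)\Bigr)\geq a_2n\geq a_1n+n,
$$
so $v(g_1(n))\geq n$. Part (i), applied to the nonzero series $g_1$, then shows that the set is very sparse.
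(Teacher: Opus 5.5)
Your proof is correct and is essentially the paper's argument. In both, a large value of $v_{\varpi}(f(n))$ forces $n$ to be $\varpi$-adically close to one zero of $f$ (to valuation about $n/d$), so any later element of the same piece differs from $n$ by a multiple of $p^{k}$ with $k$ linear in $n$; (ii) then follows from (i) by the same leading-term valuation comparison. The only difference is how the localization is obtained: you read off $v_{\varpi}(f(n))=s+\sum_j v_{\varpi}(n-\alpha_j)$ from Weierstrass preparation in $\mathcal{O}_K\langle x\rangle$, while the paper uses the finitely many zeros $t_i\in\mathcal{O}_K$, the local formula $v_{\varpi}(f(n))=d_iv_{\varpi}(n-t_i)+c_i$ near each, and compactness of the complement of small discs; note that in the version you need $P$ is merely monic in $\mathcal{O}_K[x]$ (of degree the last index of a unit coefficient of $f_0$), not distinguished in the usual sense, as $f_0=x-1$ shows, but you only use monicity and $|u|\equiv 1$ on $\mathcal{O}_K$, so this is harmless.
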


\begin{proof}
As part (ii) is an immediate consequence of part (i), we may focus on (i).

Since $f$ is a nonzero analytic function, it has only finitely many zeros on the compact domain $\mathcal{O}_K$. Denote the zeros as $t_1,\dots,t_r$ and denote $d_1,\dots,d_r$ as their multiplicities. There are nonnegative integers $e_1,\dots,e_r$ and $c_1,\dots,c_r$ such that for every $1\leq i\leq r$, we have $v_{\varpi}(f(n))=d_iv_{\varpi}(n-t_i)+c_i$ whenever $v_{\varpi}(n-t_i)\geq e_i$. Using compactness again, we see there exists $M\geq0$ such that $\{n\geq M\mid v_{\varpi}(f(n))\geq n\}\subseteq\bigcup\limits_{i=1}^{r}\overline{D}(t_i,|\varpi|^{e_i})$ where $\overline{D}(t_i,|\varpi|^{e_i})\coloneqq\{x\in\mathcal{O}_K\mid|x-t_i|\leq|\varpi|^{e_i}\}$. We only need to prove that each $\{n\geq M\mid v_{\varpi}(f(n))\geq n\}\cap\overline{D}(t_i,|\varpi|^{e_i})$ is very sparse.

For every $i$, this set equals to $\left\{n\geq M\ \big|\ v_{\varpi}(n-t_i)\geq\max\{e_i,\frac{n-c_i}{d_i}\}\right\}$. For any two consecutive elements $n<n'$ in this set, we have $v_p(n'-n)v_{\varpi}(p)=v_{\varpi}(n'-n)\geq\frac{n-c_i}{d_i}$. Hence $n'-n\geq p^{\frac{n-c_i}{d_iv_{\varpi}(p)}}$. This gives the very-sparsity and thus we finish the proof.
\end{proof}

\begin{remark}
Let us consider the analogue of Lemma \ref{psparse}(ii) in the complex setting. As suggested in Lemma \ref{gaplem}, we concern about sets with the shape $\{n\in\mathbb{N}\mid|e^{2\pi in\theta}-1|\leq\varepsilon^n\}$ for some $\theta\in\mathbb{R}$ and $\varepsilon>0$. This condition amounts to requiring that $\|n\theta\|<\varepsilon^n$, where $\|n\theta\|$ stands for the distance from $n\theta$ to $\mathbb{Z}$. By considering pathological Liouville numbers, we find that we can gain nothing stronger than the zero density of such sets.
\end{remark}

\begin{corollary}\label{cor}
Let $x\colon\mathbb{N}\to\mathbb{Q}_p$ be an (inverse) l.r.s.. Then $\{n\in\mathbb{N}\mid x(n)\in\mathbb{Z}_p\}$ is almost periodic.
\end{corollary}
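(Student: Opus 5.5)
We will prove Corollary \ref{cor} by writing $x$ through an explicit $p$-adic closed form and then reducing to Lemma \ref{psparse}(ii). Say $P(x)=0$ for a monic $P$, or $\tilde P(x)=0$ for the inverse polynomial of a monic $P$; either way $x$ satisfies a linear recurrence with $\mathbb{Z}$-coefficients. By Remark \ref{ezfct}(ii) and Remark \ref{ezrmk}, we may split $\mathbb{N}$ into finitely many residue classes and replace $x$ by each subsequence $n\mapsto x(k+ln)$, which is of the same type. Over a finite extension $K$ of $\mathbb{Q}_p$ containing all roots $\alpha_1,\dots,\alpha_s$ of the characteristic polynomial, we have
$$
x(n)=\sum_{j}\alpha_j^nR_j(n)
$$
with $R_j\in K[n]$. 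Here $\alpha_j\neq0$ because the constant coefficient is $1$ in the inverse case; in the monic case we may discard the zero roots, since they affect only finitely many $n$. Write $\alpha_j=\varpi^{e_j}u_j$ with $u_j$ a unit. Passing to a subsequence with step $l$ divisible by $q-1$ for the residue field size $q$, and replacing $l$ by a suitable multiple, each $u_j^{l}$ is a principal unit close to $1$. Hence $n\mapsto u_j^{ln}$ is a convergent power series in $n$, via $\exp(n\log u_j^l)$. Grouping terms by valuation gives
$$
x(k+ln)=\sum_{a}\varpi^{an}G_a(n),
$$
a finite sum over distinct integers $a=le_j$, with each $G_a$ convergent and nonzero (possibly after rescaling by a fixed power of $\varpi$, absorbed into the coefficients).

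The condition $x(n)\in\mathbb{Z}_p$ means $v_\varpi(x(n))\geq0$. Only terms with $a<0$ can produce negative valuation. Let $a_1<a_2<\dots<a_r<0$ be the negative exponents. The terms with $a\geq0$ are integral up to a bounded denominator, coming from the constants in the $G_a$. After multiplying by a fixed power of $\varpi$ to clear it, the condition becomes
$$
v_\varpi\Bigl(\sum_{a_i<0}\varpi^{a_in}G_{a_i}(n)\Bigr)\geq -c
$$
for a constant $c$.

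If no negative exponents occur, the set is all of $\mathbb{N}$ up to finitely many exceptions, hence periodic. Otherwise we factor out the most negative term:
$$
\varpi^{a_1n}\bigl(G_{a_1}(n)+\varpi^{(a_2-a_1)n}G_{a_2}(n)+\cdots\bigr).
$$
The condition then forces
$$
v_\varpi\bigl(G_{a_1}(n)+\varpi^{(a_2-a_1)n}(\cdots)\bigr)\geq |a_1|n-c.
$$
Since $|a_1|\geq1$, the idea is to set $F_n=G_{a_1}(n)+\cdots$, which depends on $n$ in two ways, and compare it with Lemma \ref{psparse}(i). Two cases arise.

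\textbf{Case 1: $v_\varpi(G_{a_1}(n))\geq n-c'$.} Lemma \ref{psparse}(i), with a shift absorbing $c'$ (apply it to $\varpi^{c'}G_{a_1}$), shows this set is very sparse.

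\textbf{Case 2: $v_\varpi(G_{a_1}(n))< n-c'$.} The correction terms have valuation at least $(a_2-a_1)n\geq n$, which exceeds $v_\varpi(G_{a_1}(n))$ for large $n$ once $c'$ is chosen suitably. So by the ultrametric inequality $v_\varpi(F_n)=v_\varpi(G_{a_1}(n))<n-c'$. Choosing $c'$ compatibly with $|a_1|n-c\geq n-c'$, this contradicts the required inequality except for finitely many $n$.

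Thus each residue-class subsequence gives either a cofinite set or a very sparse set, and Remark \ref{ezrmk} assembles these into an almost periodic set.

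\textbf{Main obstacle.} The delicate step is the analytic interpolation. We must choose $l$ so that every $u_j^{ln}$ and every polynomial factor is a convergent power series with coefficients in $\mathcal{O}_K$, after a harmless scaling. We must also ensure $G_{a_1}$ is not identically zero. The latter holds because a nontrivial combination of distinct exponential-polynomials with the same valuation $e_j$ cannot vanish identically, though different $u_j$ may coincide after raising to the $l$-th power only if the $\alpha_j$ differ by roots of unity. We handle this by choosing $l$ to also kill those roots of unity first and then combining terms. Finally, we need to match the constants $c$ and $c'$ carefully so that the dichotomy between the two cases is exhaustive.
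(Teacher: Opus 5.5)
Your skeleton is the same as the paper's: closed form over $K$, passage to residue classes so that $u_j^{ln}$ becomes a convergent power series in $n$, grouping by $\varpi$-exponent, factoring out the least exponent, and applying Lemma \ref{psparse}(i). Your treatment of the case with a negative exponent is fine. The gap is in how you handle the exponent-zero terms.

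You claim that only terms with $a<0$ can produce negative valuation, and that the condition ``becomes'' a condition on the negative part alone. But after rescaling, the $a=0$ term $G_0(n)$ carries a factor $\varpi^{-c}$ and can be genuinely non-integral. From $v_\varpi(N+P)\geq 0$ and $v_\varpi(P)\geq -c$ you only get the implication $v_\varpi(N)\geq -c$. That is enough for the sparse case. However, your conclusion that with no negative exponents the set is cofinite uses the converse, and it is false.

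For a concrete failure, take odd $p$ and $x(n)=n/p$. It satisfies $x(n+2)-2x(n+1)+x(n)=0$, and its only root is the unit $1$. With $l=p-1$, which your choice of step allows, one gets $\{n\mid x(k+ln)\in\mathbb{Z}_p\}=\{n\mid n\equiv k\pmod p\}$. This set is neither cofinite nor very sparse. So your per-class dichotomy ``cofinite or very sparse'' is wrong, and the genuinely periodic part of ``almost periodic'' is exactly what your argument never produces.

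The missing step is the paper's case $b_1=0$. Set up the normalization first:
\begin{enumerate}
\item Scale so that every $G_a$ has coefficients in $\mathcal{O}_K$ and the condition reads $v_\varpi\bigl(\sum_a\varpi^{an}G_a(n)\bigr)\geq c$ with $c\geq 0$.
\item Simply drop any $G_a$ that vanishes identically, so you never need to prove $G_{a_1}\not\equiv 0$.
\end{enumerate}
Then split on the least remaining exponent:
\begin{enumerate}
\item If it is positive, the set is cofinite.
\item If it is negative, factor out $\varpi^{a_1n}$ and keep \emph{all} other terms in the correction, including those with $a\geq 0$, since each has exponent at least $a_1+1$. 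This gives $v_\varpi(G_{a_1}(n))\geq n$, and Lemma \ref{psparse}(i) applies directly.
\item If it is $0$, then for $n\geq c$ the terms with $a>0$ have valuation at least $n\geq c$. So the condition is equivalent to $v_\varpi(G_0(n))\geq c$. Since $G_0$ has integral coefficients, $v_\varpi(G_0(n)-G_0(n'))\geq v_\varpi(n-n')$. Hence the condition depends only on $n$ modulo $p^M$ once $Mv_\varpi(p)\geq c$, and the set is a finite union of arithmetic progressions.
\end{enumerate}
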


\begin{proof}
Indeed, the conclusion holds for every linear recurrence sequence in $\mathbb{Q}_p$ with coefficients also in $\mathbb{Q}_p$ (instead of $\mathbb{Z}$). We can find a finite extension field $K$ of $\mathbb{Q}_p$ and write the general term formula $x(n)=\sum\limits_{i=1}^{r}P_i(n)\lambda_i^n$ with polynomials $P_i\in K[x]$ and characteristic roots $\lambda_i\in K^{\times}$. Let $\varpi$ be a uniformizer of $K$ and write $\lambda_i=\varpi^{a_i}\alpha_i$ for each $i$, where $a_i\in\mathbb{Z}$ and $\alpha_i\in\mathcal{O}_K^{\times}$. We can find $D\in\mathbb{Z}_+$ such that each exponential function $\alpha_i^{Dn}$ can be represented as a convergent power series (in the sense of Lemma \ref{psparse}) with variable $n$. Passing to arithmetic progressions $\{a+Dn\mid n\in\mathbb{N}\}$ for $0\leq a<D$, we only need to prove the following statement. For integers $b_1<\cdots<b_s$ and nonzero convergent power series $f_1,\dots,f_s$, the set
$$
\left\{n\in\mathbb{N}\ \Bigg|\ v_{\varpi}\left(\sum\limits_{i=1}^{s}\varpi^{b_in}f_i(n)\right)\geq c\right\}
$$
is almost periodic for every $c\in\mathbb{N}$.

We may assume $b_1\leq0$ since otherwise the conclusion is easy. If $b_1<0$, then this set is very sparse according to Lemma \ref{psparse}(i). If $b_1=0$, then for any sufficiently large integer $n$ the condition is equivalent to $v_{\varpi}(f_1(n))\geq c$. By picking $M\geq\frac{c}{v_{\varpi}(p)}$ and considering $n$ modulo $p^M$, we find that this set is a finite union of arithmetic progressions. Thus we finish the proof.
\end{proof}

Now we prove Proposition \ref{bkwdseq}. The key point is that an inverse l.r.s. in $\mathbb{Q}_p/\mathbb{Z}_p$ can be lifted to an inverse l.r.s. in $\mathbb{Q}_p$ which satisfies the same relation. This needs the compactness of $\mathbb{Z}_p$ and we cannot do this towards sequences in $\mathbb{Q}/\mathbb{Z}$.

\proof[Proof of Proposition \ref{bkwdseq}]
As we have mentioned before, the isomorphism $\mathbb{Q}/\mathbb{Z}\stackrel{\sim}\longrightarrow\bigoplus\limits_{p\ \text{prime}}\mathbb{Q}_p/\mathbb{Z}_p$ guarantees that we can deal with every component $\mathbb{Q}_p/\mathbb{Z}_p$ separately. Therefore, we only need to prove that the zero set of every inverse l.r.s. in $\mathbb{Q}_p/\mathbb{Z}_p$ is almost periodic.

Let $x\colon\mathbb{N}\to\mathbb{Q}_p/\mathbb{Z}_p$ be an inverse l.r.s. satisfying $x(n)+\sum\limits_{i=1}^{d}a_ix(n+i)=0$ for every $n\in\mathbb{N}$. Here the coefficients $a_1,\dots,a_d$ are integers and $a_d\neq0$. We arbitrarily fix an lifting $\tilde{x}\colon\mathbb{N}\to\mathbb{Q}_p$ of $x$. Write 
\[
A = 
\begin{bmatrix}
-a_1 & -a_2 & -a_3 & \cdots & -a_d \\
1 & 0 & 0 & \cdots & 0 \\
\vdots & \ddots & \ddots & \ddots & \vdots \\
0 & \cdots & 1 & 0 & 0 \\
0 & \cdots & 0 & 1 & 0
\end{bmatrix}_{d\times d}
\]
and $y_n=(\tilde{x}(n),\dots,\tilde{x}(n+d-1))^{\top}$. Then $A$ is a linear automorphism of $\mathbb{Q}_p^d$ and we have $Ay_{n+1}-y_n\in\mathbb{Z}_p^d$ for every $n$. Also, we have $A^{n+1}(\mathbb{Z}_p^d)\subseteq A^n(\mathbb{Z}_p^d)$ for every $n$. Hence we may denote $V_n$ as the coset $A^ny_n+A^n(\mathbb{Z}_p^d)$ and see that $V_{n+1}\subseteq V_n$ for every $n$. So $(V_n)_{n\geq0}$ is a decreasing sequence of nonempty compact sets and we conclude that $\bigcap\limits_{n=0}^{\infty}V_n\neq\emptyset$.

Pick $v\in\bigcap\limits_{n=0}^{\infty}V_n$ and denote $v_n=A^{-n}v$. Then we have $Av_{n+1}=v_n$ and $v_n-y_n\in\mathbb{Z}_p^d$ for every $n$. Also, there exists a sequence $w\colon\mathbb{N}\to\mathbb{Q}_p$ satisfying the relation $w(n)+\sum\limits_{i=1}^{d}a_iw(n+i)=0$ such that $v_n=(w(n),\dots,w(n+d-1))^{\top}$ for every $n$. Thus we have
$$
x(n)=0\iff\tilde{x}(n)\in\mathbb{Z}_p\iff w(n)\in\mathbb{Z}_p
$$
and hence $\{n\in\mathbb{N}\mid x(n)=0\}$ is almost periodic according to Corollary \ref{cor}. Using Remark \ref{ezrmk}, we finish the proof by assembling those finitely many involved $p$-components.
\endproof

Theorem \ref{bkwdthm} then follows from Proposition \ref{bkwdseq} and Lemma \ref{reduce}.

Finally, we prove Proposition \ref{inverse}.

\proof[Proof of Proposition \ref{inverse}]
Firstly, we construct this inverse l.r.s. satisfying
$$
4x(n+2)-4x(n+1)+x(n)=0.
$$

Consider the sequence of integers $(n_i)_{i\geq0}$ defined by $n_0=0$ and $n_{i+1}=n_i+2^{n_i+1}$ for every $i$. For every positive integer $k$, there exists a unique $r_k\in\{0,\dots,2^k-1\}$ such that $2^k\mid n_i-r_k$ for every $i$ satisfying $n_i+1\geq k$. This implies $2^k\mid r_{k+1}-r_k$ for every $k$.

We define $x(n)$ as the class of $\frac{n-r_{n+1}}{2^{n+1}}\in\mathbb{Q}$ in $\mathbb{Q}/\mathbb{Z}$. We need to check $4x(n+2)-4x(n+1)+x(n)=0$, which is equivalent to $2^{n+1}\mid((n+2)-r_{n+3})-2((n+1)-r_{n+2})+(n-r_{n+1})$. This is valid as RHS equals to $(r_{n+2}-r_{n+1})-(r_{n+3}-r_{n+2})$.

It remains to determine the zero set. We have $x(n)=0\iff 2^{n+1}\mid n-r_{n+1}$. Thus we tautologically have $x(n_i)=0$ for every $i$. We claim that $\{n\in\mathbb{N}\mid x(n)=0\}=\{n_i\mid i\geq0\}$. Otherwise, there exists an element $n$ in the zero set which lies between $n_i$ and $n_{i+1}$ for some $i$, i.e. we have $n_i<n<n_{i+1}$. But then the four numbers $n,r_{n+1},r_{n_i+1},n_i$ are congruent modulo $2^{n_i+1}$, which implies that $n\geq n_i+2^{n_i+1}=n_{i+1}$. This contradiction finishes the proof of the claim. Hence the zero set is infinite but very sparse.

Finally, we use this inverse l.r.s. to construct a counterexample of Conjecture \ref{xie}. Let $E$ be an elliptic curve and let $A=E\times E$. Let $f$ be the self-isogeny of $A$ defined by $(a,b)\mapsto(b,4b-4a)$ for $a,b\in E(\mathbb{C})$. Since the torsion group $E_{\mathrm{tor}}$ is isomorphic to $(\mathbb{Q}/\mathbb{Z})^2$, we may fix an injection $\mathbb{Q}/\mathbb{Z}\hookrightarrow E_{\mathrm{tor}}$ and regard the sequence $x$ constructed above as a sequence of torsion points of $E$. Define $\alpha_n=(x(n+1),x(n))\in A_{\mathrm{tor}}$. Then the sequence $(\alpha_n)_{n\geq0}$ is a backward orbit of the endomorphism $f$ and the return set $\{n\in\mathbb{N}\mid\alpha_n\in E\times\{0\}\}=\{n\in\mathbb{N}\mid x(n)=0\}$ is infinite but very sparse. Hence Conjecture \ref{xie} fails for abelian surfaces.
\endproof

\section*{Acknowledgements}
We are grateful to Junyi Xie, the advisor of the author, for suggesting the topic of this article.

This work is supported by the National Natural Science Foundation of China Grant No. 12271007.

\bibliographystyle{alpha}
\bibliography{reference}

\address{Beijing International Center for Mathematical Research, Peking University, Beijing 100871, China}

\email{ys-yx@pku.edu.cn}

\end{spacing}
\end{document}